\documentclass[a4paper,11pt]{amsart}
\usepackage[utf8]{inputenc}
\usepackage{amsxtra}
\usepackage{amsopn}
\usepackage{amsmath,amsthm,amssymb}
\usepackage{amscd}
\usepackage{amsfonts}
\usepackage{latexsym}
\usepackage{verbatim}
\usepackage{color}

\theoremstyle{plain}
\newtheorem{theorem}{Theorem}[section]
\newtheorem*{theorem*}{Theorem}
\newtheorem{definition}[theorem]{Definition}
\newtheorem{lemma}[theorem]{Lemma}
\newtheorem{prop}[theorem]{Proposition}
\newtheorem{cor}[theorem]{Corollary}
\newtheorem{rem}[theorem]{Remark}
\newtheorem{ex}[theorem]{Example}
\newtheorem*{mt*}{Main Theorem}
\sloppy


\newcommand\C{{\mathbb C}}

\newcommand\R{{\mathbb R}}
\newcommand\Z{{\mathbb Z}}
\newcommand\T{{\mathbb T}}


\newcommand{\del}{\partial}
\newcommand{\delbar}{\overline{\del}}

\newcommand\ID{{\hbox{Id}}}

\title{Families of almost complex structures and transverse $(p,p)$-forms}
\author{Richard Hind, Costantino Medori and Adriano Tomassini}
\address{Department of Mathematics \\
University of Notre Dame \\
Notre Dame, IN 46556}
\email{hind.1@nd.edu}
\address{Dipartimento di Scienze Matematiche, Fisiche e Informatiche\\
Unit\`a di Matematica e Informatica,
Universit\`{a} degli Studi di Parma\\
Parco Area delle Scienze 53/A, 43124 \\
Parma, Italy}
\email{costantino.medori@unipr.it}
\email{adriano.tomassini@unipr.it}

\keywords{almost $p$-K\"ahler manifold; almost complex deformation; semi-K\"ahler metric}
\thanks{Partially supported by {\em Fondazione Bruno Kessler-CIRM (Trento)}\newline 
The first author is partially supported by Simons Foundation grant \# 633715. \newline The second and the third authors are partially supported by the Project PRIN 2017 ``Real and Complex Manifolds: Topology, Geometry and holomorphic dynamics''
and by GNSAGA of INdAM}
\subjclass[2010]{53C55, 53C25}
\begin{document}
\begin{abstract}
An {\em almost p-K\"ahler manifold} is a triple $(M,J,\Omega)$, where $(M,J)$ is an almost complex manifold of real dimension $2n$ and $\Omega$ is a closed real tranverse $(p,p)$-form on $(M,J)$, where 
$1\leq p\leq n$. When $J$ is integrable, almost $p$-K\"ahler manifolds are called $p$-{\em K\"ahler manifolds}. We produce families of almost $p$-K\"ahler structures $(J_t,\Omega_t)$ on $\C^3$, $\C^4$, and on the real torus $\mathbb{T}^6$, arising as deformations of K\"ahler structures $(J_0,g_0,\omega_0)$, such that the almost complex structures  $J_t$ cannot be locally compatible with any symplectic form for $t\neq 0$. Furthermore, examples of special compact nilmanifolds with and without almost $p$-K\"ahler structures are presented.

\end{abstract}
\maketitle
\tableofcontents

\section{Introduction}
Let $(M,J,g,\omega)$ be a $2n$-dimensional compact K\"ahler manifold, that is $M$ is a compact $2n$-dimensional smooth manifold endowed with an integrable almost complex structure $J$ and $J$-Hermitian metric $g$ whose fundamental form $\omega$ is closed, namely $(J,g,\omega)$ is a K\"ahler structure on $M$. Then, the celebrated theorem of Kodaira and Spencer \cite[Theorem 15]{KS} states that the K\"ahler condition is stable under small $\mathcal{C}^\infty$-deformations of the complex structure $J$. As a consequence of complex Hodge Theory, the existence of a K\"ahler structure on a compact manifold imposes strong restrictions on the topology of $M$, e.g., the odd index Betti numbers of $M$ are even, the even index Betti numbers are greater than zero and the de Rham complex of $M$ is a formal differential graded algebra in the sense of Sullivan. In \cite{HL} Harvey and Lawson give an intrinsic characterization in terms of currents of compact 
complex manifolds admitting a K\"ahler metric. However there are many examples of compact complex manifolds without any K\"ahler structures; examples are  easily constructed by taking compact quotients of simply connected nilpotent Lie groups. The underlying differentiable manifolds of such complex manifolds may carry further structures, e.g., symplectic structures, and {\em balanced}, {\em SKT}, {\em Astheno K\"ahler} metrics, that is Hermitian metrics whose fundamental form $\omega$ satisfies $d\omega^{n-1}=0$, respectively $\del\delbar\omega=0$, respectively $\del\delbar\omega^{n-2}=0$, where $\dim_\C M=n$. 

An almost K\"ahler manifold is an almost complex manifold equipped with a Hermitian metric whose fundamental form is closed. 
In the almost K\"ahler setting stability properties are drastically different. First of all, a dense subset of almost complex structures $J$ on $\R^{2n}$, with $n>2$, are not compatible with any symplectic form, that is, there are no symplectic forms $\omega$, such that $g_J(\cdot,\cdot)=\omega(\cdot,J\cdot)$ is a $J$-Hermitian metric, or equivalently, $\omega$ is a closed positive $(1,1)$-form. Such almost complex structures can be extended to any K\"ahler manifold $(M,J)$ of dimension bigger than $2$, showing that there exists a curve $\{J_t\}_{t\in(-\varepsilon,\varepsilon)}$ such that $J_0=J$ and $J_t$, for $t\neq 0$, is a (non-integrable) almost complex structure on $M$, which is not even locally compatible with respect to any symplectic form. This is a direct consequence of e.g., \cite[Theorem 2.4, Corollary 2.5]{MT} which deals with the local case. 

In the present paper, starting with a complex manifold $(M,J)$, we are interested 
in studying the stability properties of transversely closed $(p,p)$-forms, namely the stability properties of $p$-{\em K\"ahler manifolds} in the terminology of Alessandrini and Andreatta \cite{AA} under possible non integrable small deformations of the complex structure.\newline 
The notion of transversality was firstly introduced by Sullivan in \cite{S}, in the context of {\em cone structures}, namely a continuous 
field of cones of {\em p}-vectors on a manifold, and then the $p$-K\"ahler condition was studied by several authors (see e.g., \cite{AA,AB1} and the references therein). In particular, $1$-K\"ahler manifolds correspond to K\"ahler manifolds and $(n-1)$-K\"ahler manifolds correspond  to {\em balanced} manifolds in the terminology of Michelsohn (see \cite{M}): for the proofs of these results see  \cite[Proposition 1.15]{AA} or \cite[Corollary 4.6]{RWZ1}. K\"{a}hler manifolds are $p$-K\"{a}hler for all $p$ (by taking the $p$-th power of the form) but  $p$-K\"{a}hler manifolds may not be K\"ahler. Note that there is a difference between the case $p=n-1$ and $p<n-1$. Indeed, according to \cite[p.279]{M}, if $\Omega$ is an $(n-1)$-K\"ahler structure, then $\Omega=\omega^{n-1}$ for a suitable fundamental form $\omega$ of a Hermitian metric on $M$. Hence $d\omega^{n-1}=0$ and so $M$ admits a balanced metric. On the other hand if $p<n-1$ and $\omega$ is the fundamental form of an almost Hermitian metric on $M$, then $d\omega^p=0$ implies that $d\omega=0$, so in fact $M$ is almost K\"ahler (see \cite[Theorem 3.2]{GH}). 

In contrast to the K\"ahler case, the $p$-K\"ahler condition on a compact complex manifold is not stable under small deformation of the complex structure. This was proved in \cite{AB1} by constructing a non balanced deformation of the natural complex structure for the Iwasawa manifold, which carries a balanced metric. 

Recently, in \cite{RWZ}, Rao, Wan and Zhao further studied the stability of $p$-K\"ahler compact manifolds under small integrable deformations of the complex structure. Assuming that the $(p, p + 1)$-th mild $\del\delbar$-lemma holds, it is shown that  $p$-K\"ahler structures are stable for all $1 \le p \le n-1$. Here the $(p, p + 1)$-th mild $\del\delbar$-lemma for a complex manifold means that each $\del$-exact and $\delbar$-closed
$(p, p + 1)$-form on this manifold is $\del\delbar$-exact. Note that such a condition does not hold in the Iwasawa example in \cite[p.1062]{AB1}. For other recent results on families of compact balanced manifolds see \cite{Sf}.\newline
In the terminology by Gray and Hervella \cite[p.40]{GH}, a $J$-Hermitian metric $g$ on an almost complex manifold $(M,J)$ of complex dimension $n$ is said to be {\em semi-K\"ahler} if $d\omega^{n-1}=0$.\smallskip

The aim of this paper is to produce families of almost $p$-K\"ahler structures $(J_t,\Omega_t)$ arising as deformations of K\"ahler structures $(J_0,g_0,\omega_0)$, such that the almost complex structures  $J_t$ cannot be locally compatible with any symplectic form for $t\neq 0$.

Our first main result is the following, see Theorem \ref{main-theorem-1} \medskip

\noindent{\bf Theorem} {\em Let $(J,\omega)$ be the standard K\"ahler structure on the standard torus 
$\T^{6}=\R^{6}\slash \Z^{6}$, with coordinates $(z_1,z_2,z_3)$, $z_j=x_j+iy_j$, $j=1,2,3$. Let $f=f(z_2,\overline{z}_2)$ be a $\Z^{6}$-periodic smooth complex valued function on $\R^6$ and set $f=u+iv$, where $u=u(x_2,y_2)$, $v=v(x_2,y_2)$. \newline
Let $I=(-\varepsilon,\varepsilon)$. Assume that $u$, $v$ satisfy the following condition
 $$
 \left(\frac{\partial u}{\partial x_2}
 +
 \frac{\partial v}{\partial y_2}\right)\Big\vert_{(x,y)=0}\neq 0
$$
Then, for $\varepsilon >0$ small enough, there exists a $1$-parameter complex family of almost 
complex structures $\{J_{t}\}_{t\in I}$ on $\mathbb{T}^{6}$, such that
\begin{enumerate}
 \item[I)] $J_{0}=J$;
 \item[II)] $J_{t}$ admits a semi-K\"ahler metric for all $t\in I$;
 \item[III)] For any given $0\neq t\in I $, the almost complex structure $J_{t}$ is not locally compatible with respect to any symplectic form on $\mathbb{T}^6$. 
\end{enumerate}
} 
\medskip





Then, starting with a balanced structure on $(J,g,\omega)$ on $\C^n$, we provide necessary conditions in order that a curve $(J_t,g_t,\omega_t)$ give rise to semi-K\"ahler structures on $\C^n$ (Theorem \ref{m-theorem-prime}). Special results are obtained for $n=3$, $p=2$ (Theorem \ref{m-theorem}, Corollaries \ref{m-cor-3} and \ref{m-cor-3-first}).

Finally we have a result for almost $2$-K\"{a}hler structures on $\C^4$, see Theorem \ref{C-4-main}.

\noindent{\bf Theorem} {\em There exists a family of almost-complex structures $J_t$ on $\C^4$ such that $J_0=i$ is the standard integrable complex structure and $J_t$ is almost $2$-K\"{a}hler for all $t \neq 0$, but $J_t$ is not locally K\"{a}hler for all $t \neq 0$.
} 
\medskip

It is interesting to contrast this with the linear case, where we see in Proposition \ref{linearcase} that a complex structure preserving a product $\omega^p$ automatically preserves $\omega$ (up  to sign). Hence for $p<n$ the almost $p$-K\"ahler (but not almost K\"{a}hler) forms in a deformation cannot be powers of 
$2$-forms $\omega_t$ (since the $\omega_t$ would necessarily be closed and hence almost K\"{a}hler). In particular it is impossible to find a deformation as in the theorem above where the almost $2$-K\"{a}hler forms remain equal to $\omega_0^p$ (where $\omega_0$ is the standard K"{a}hler form on $\C^4$). This differs from the K\"{a}hler case, where by Moser's theorem we may apply a family of diffeomorphisms $\Phi_t$ to any deformation $(J_t,\omega_t)$, with $[\omega_t]$ constant, such that the form is unchanged, that is $(\Phi_t^*J_t,\Phi_t^*\omega_t)=(J'_t,\omega_0)$. \smallskip

The paper is organized as follows: in Section \ref{preliminaries} we start by fixing notation and recalling some basic facts on $p$-K\"ahler and almost $p$-K\"ahler structures, giving a simple example of a compact $(2n-1)$-dimensional complex manifold without any $p$-K\"ahler structure, for $1\leq p\leq (n-1)$. In Section \ref{complex-curves} we prove Proposition \ref{linearcase}. In Proposition \ref{fixedomega} we provide an example of a deformation of a non-K\"{a}hler integrable complex structure into nonintegrable structures such that none of the structures are almost K\"{a}hler, in fact they cannot be tamed by any symplectic form, but they are all almost $2$-K\"{a}hler, and in fact all compatible with the same $(2,2)$ form. Finally Sections \ref{main} and \ref{main-examples} are devoted to the proofs of the main results and to the constructions of the almost $p$-K\"ahler families. In particular, in Theorem \ref{C-4-main} we obtain the family of almost $2$-K\"ahler structures in $\C^4$ which are not almost K\"ahler, and in Proposition \ref{prop-Iwasawa} we construct a curve of almost complex structures $\{J_t\}_{t\in\R}$ (non integrable for $t\neq0$) on the Iwasawa manifold such that $J_0$ admits a balanced metric and $J_t$ does not admit any semi-K\"ahler metric for $t\neq 0$.

\vskip.2truecm\noindent
\noindent {\em Acknowledgements.} We would like to thank {\em Fondazione Bruno Kessler-CIRM (Trento)} for their support and very pleasant working environment. We would like also to thank S. Rao and Q. Zhao for useful comments and remarks.
\smallskip

\section{Almost $p$-K\"ahler structures}\label{preliminaries}
Let $V$ be a real $2n$-dimensional vector space endowed with a complex structure $J$, that is an automorphism $J$ of $V$ satisfying $J^2=-\hbox{\rm id}_V$. Let $V^*$ be the dual space of $V$ and denote by the same symbol the complex structure on $V^*$ naturally induced by $J$ on $V$. Then the complexified 
$V^{*\C}$ decomposes as the direct sum of the $\pm\,i$-eigenspaces, $V^{1,0}$, $V^{0,1}$ of the extension of $J$ to $V^{*\C}$, given by
$$
\begin{array}{l}
V^{1,0}=\{\varphi\in V^{*\C}\,\,\,\vert\,\,\,J\varphi=i\varphi\}= 
\{\alpha-iJ\alpha \,\,\,\vert\,\,\,\alpha\in V^{*}\}\\
V^{0,1}=\{\psi\in V^{*\C}\,\,\,\vert\,\,\,J\psi=-i\psi\}= 
\{\beta+iJ\beta \,\,\,\vert\,\,\,\beta\in V^{*}\},
 \end{array}
$$
that is 
$$V^{*\C}=V^{1,0}\oplus V^{0,1}.
$$
According to the above decomposition, the space $\Lambda^r(V^{*\C})$ of complex $r$-covectors on $V^\C$ 
decomposes as 
$$
\Lambda^r(V^\C)=\bigoplus_{p+q=r}\Lambda^{p,q}(V^{*\C}),
$$
where 
$$
\Lambda^{p,q}(V^{*\C})=\Lambda^p(V^{1,0})\otimes\Lambda^q(V^{0,1}).
$$
If $\{\varphi^1,\ldots,\varphi^{n}\}$ is a basis of $V^{1,0}$, then
$$
\{\varphi^{i_1}\wedge\cdots\wedge\varphi^{i_p}\wedge\overline{\varphi^{j_1}}\wedge\cdots\wedge\overline{\varphi^{j_q}}\,\,\,\vert\,\,\, 1\leq i_1<\cdots<i_p\leq n,\,\,
1\leq j_1<\cdots<j_q\leq n\}
$$
is a basis of $\Lambda^{p,q}(V^{*\C})$. Set $\sigma=i^{p^2}2^{-p}$. Then, given any 
$\varphi\in\Lambda^{p,0}(V^{*\C})$ we have that
$$
\overline{\sigma_p\varphi\wedge\overline{\varphi}}=\sigma_p\varphi\wedge\overline{\varphi},
$$
that is $\sigma_p\varphi\wedge\overline{\varphi}$ is a $(p,p)$-real form. Consequently, denoting by
$$
\Lambda_{\R}^{p,p}(V^{*\C})=\{\psi\in\Lambda^{p,p}(V^{*\C})\,\,\,\vert\,\,\,\psi=\overline{\psi}\},
$$
we get that 
$$
\{\sigma_p\varphi^{i_1}\wedge\cdots\wedge\varphi^{i_p}\wedge\overline{\varphi^{i_1}}\wedge\cdots\wedge\overline{\varphi^{i_p}}\,\,\,\vert\,\,\, 1\leq i_1<\cdots<i_p\leq n\}
$$
is a basis of $\Lambda_{\R}^{p,p}(V^{*\C})$.
\begin{rem}
The complex structure $J$ acts on the space of real $k$-covectors $\Lambda^k(V^*)$ by setting, for any given $\alpha\in \Lambda^k(V^*)$,
$$J\alpha (V_1,\ldots,V_k)=\alpha(JV_1,\ldots,JV_k).
$$
Then it is immediate to check that if $\psi\in\Lambda_{\R}^{p,p}(V^{*\C})$ then $J\psi=\psi$. For $k=2$, the converse holds.
\end{rem}
Denoting by 
$$
\hbox{\rm Vol}=(\frac{i}{2}\varphi^1\wedge\overline{\varphi^1})\wedge\cdots\wedge
(\frac{i}{2}\varphi^n\wedge\overline{\varphi^n}),
$$
we obtain that 

$$
\hbox{\rm Vol}=\sigma_n\varphi^1\wedge\cdots \wedge\varphi^n\wedge \overline{\varphi^1}\wedge\cdots\wedge
\wedge\overline{\varphi^n},
$$
that is $\hbox{\rm Vol}$ is a volume form on $V$. A real $(n,n)$-form $\psi$ is said to be {\em positive} respectively {\em strictly positive} if 
$$\psi=a{\rm Vol},
$$ 
where $a\geq 0$, respectively $a>0$. By definition, $\psi\in\Lambda^{p,0}(V^{*\C})$ is said to be {\em simple} or {\em decomposable} if 
$$
\psi=\eta^1\wedge\cdots\wedge\eta^p,
$$
for suitable $\eta^1,\ldots,\eta^p\in V^{1,0}$. Let $\Omega\in\Lambda_{\R}^{p,p}(V^{*\C})$. Then $\Omega$ is said to be {\em transverse} if, given any non-zero simple $(n-p)$-covector $\psi$, the real $(n,n)$-form 
$$
\Omega\wedge\sigma_{n-p}\psi\wedge \overline{\psi}
$$
is strictly positive. \newline
The notion of positivity on complex vector spaces can be transferred pointwise to almost complex 
manifolds. Let $(M,J)$ be an almost complex manifold of real dimension $2n$; we will denote by 
$A^{p,q}(M)$ the space of complex $(p,q)$-forms, that is the space of smooth sections of the bundle 
$\Lambda^{p,q}(M)$ and by $A_{\R}^{p,p}(M)$ the space of real $(p,p)$-forms.
\begin{definition}
 Let $(M,J)$ be an almost complex manifold of real dimension $2n$ and let $1\leq p\leq n$. A 
 $p$-{\em K\"ahler form} is a closed real transverse $(p,p)$-form $\Omega$, that is 
 $\Omega$ is $d$-closed and, at every $x\in M$, $\Omega_x\in\Lambda^{p,p}_{\R}(T_x^*M)$ is transverse. The triple $(M,J,\Omega)$ is said to be an {\em almost p-K\"ahler manifold}.
\end{definition}

\section{Curves of almost complex structures preserving the $2^{th}$-power}\label{complex-curves}
Let $I=(-\varepsilon,\varepsilon)$ and 
$\{J_t\}_{t\in I}$ be a smooth curve of almost
complex structures on $M$, such that $J_0=J$. Then, for small $\varepsilon$, there exists a unique $L_t:TM\to TM$, 
with $L_tJ+JL_t=0$, for every $t\in I$, such that
\begin{equation}
J_t=(\ID+L_t)J(\ID+L_t)^{-1}, 
\end{equation}
for every $t$ (see e.g., \cite{AL}, \cite[Sec. 3]{dBM}). We can write $L_t=tL+o(t)$. Assume that $J$ is {\em compatible} with respect to a symplectic form $\omega$ on $M$, that is, at any given $x\in M$, 
$$
g_x(\cdot,\cdot):=\omega_x(\cdot, J\cdot)
$$
is a positive definite Hermitian metric on $M$. Equivalently, $\omega$ is a positive $(1,1)$-form with respect to $J$. Then 
$\omega$ is a positive $(1,1)$-form with respect to $J_t$ if and only if $L_t$ is $g_{J}$-symmetric and 
$\parallel L_t\parallel < 1$. \newline 
We can show the following
\begin{prop}\label{linearcase}
 Let $\omega$ be a positive $(1,1)$-form on the complex vector space $\C^n$. Let $\{J_t\}$ be a curve of linear complex structures on $\C^n$. If $J_t$ preserves $\omega^p$ for all $t$, $1\leq p <n$, then $J_t$ preserves $\omega$.
\end{prop}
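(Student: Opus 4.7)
\medskip

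The plan is to set $\eta_t := J_t^*\omega$ and reduce the problem to showing that the curve $\eta_t$ is constant. Since $\omega$ is positive $(1,1)$ with respect to the initial complex structure $J_0$, we have $\eta_0 = \omega$, and the hypothesis $J_t^*\omega^p = \omega^p$ becomes the identity $\eta_t^p = \omega^p$, constant in $t$. Differentiating in $t$ yields
\[
0 \;=\; \frac{d}{dt}\eta_t^p \;=\; p\,\eta_t^{p-1}\wedge \dot\eta_t,
\]
so everything comes down to showing that the wedge operator $\beta\mapsto \eta_t^{p-1}\wedge \beta$ is injective on $2$-forms for $t$ near $0$.

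The key algebraic input is a soft Hard Lefschetz statement: on a $2n$-dimensional real vector space with a nondegenerate $2$-form $\omega$, the standard Lefschetz isomorphism $L_\omega^{n-2}\colon \Lambda^2\to\Lambda^{2n-2}$ together with the factorization $L_\omega^{n-2} = L_\omega^{\,n-1-p}\circ L_\omega^{\,p-1}$ forces $L_\omega^{\,p-1}\colon \Lambda^2\to\Lambda^{2p}$ to be injective for every $p\le n-1$; this is precisely where the hypothesis $p<n$ enters. Injectivity is an open condition on linear maps between two fixed finite-dimensional spaces, so because $\eta_0=\omega$ the operator $L_{\eta_t^{p-1}}$ remains injective for all $t$ in an open interval around $0$. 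Hence $\dot\eta_t=0$ there and $\eta_t\equiv \omega$ near $t=0$.

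To propagate the identity to the entire parameter interval I would run a standard clopen argument: the locus $\{t\in I : \eta_t=\omega\}$ is closed by continuity of $t\mapsto\eta_t$ and is open by repeating the Lefschetz step at any of its points (at such $t_0$ we still have $\eta_{t_0}=\omega$, so the perturbed operator $L_{\eta_t^{p-1}}$ stays injective on a neighborhood of $t_0$); connectedness of $I$ then gives $J_t^*\omega=\omega$ throughout. The mild ``up to sign'' ambiguity mentioned in the introduction is absorbed in the same way: if instead $\eta_0=-\omega$, the form $(-\omega)^{p-1}$ is still Hard Lefschetz nondegenerate and the same proof yields $\eta_t\equiv -\omega$.

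The only nontrivial step is the Hard Lefschetz injectivity of $L_\omega^{\,p-1}\colon \Lambda^2\to\Lambda^{2p}$ for $p<n$; the hypothesis is sharp, since for $p=n$ the space $\Lambda^{2n}$ is one-dimensional and $L_\omega^{\,n-1}$ has a large kernel on $\Lambda^2$, reflecting the fact that every orientation-preserving linear map (in particular every linear complex structure) preserves the top power $\omega^n$, a nonzero multiple of the volume form.
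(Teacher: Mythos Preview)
Your proof is correct and takes a genuinely different route from the paper's.

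The paper argues pointwise: it shows that any single linear complex structure $J$ preserving $\omega^p$ must preserve $\omega$ up to sign, by observing that symplectic complements of codimension-$2$ symplectic subspaces can be characterized purely in terms of $\omega^p$, so $J$ permutes symplectic $2$-planes; applying this to a symplectic basis forces the products $\lambda_{i_1}\cdots\lambda_{i_p}=1$ and hence all $\lambda_k=\pm 1$ with a common sign. Connectedness of the curve then pins down the sign.

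Your approach bypasses this geometric analysis entirely: you differentiate the identity $\eta_t^p=\omega^p$, invoke the linear Hard Lefschetz isomorphism $L_\omega^{n-2}\colon\Lambda^2\to\Lambda^{2n-2}$ (factored through $L_\omega^{p-1}$) to get injectivity of $\beta\mapsto\eta_t^{p-1}\wedge\beta$ near $t=0$, and then run a clopen argument. This is shorter and uses only standard symplectic linear algebra, but it proves slightly less: the paper's argument yields the stronger pointwise statement that \emph{any} complex structure preserving $\omega^p$ preserves $\omega$ up to sign, independent of any deformation, whereas your argument genuinely needs the curve (to start the clopen argument at a point where $\eta_{t_0}=\omega$). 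For the proposition as stated both are adequate; the paper's version is what is actually used later when discussing why almost $p$-K\"ahler forms in a deformation cannot remain equal to $\omega_0^p$.
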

\begin{proof}
We will show that a complex structure on $\C^n$ preserving $\omega^p$ preserves $\omega$ up to sign. Hence in a $1$-parameter family with $J_0 =i$ all complex structures must preserve $\omega$ itself.

First note that a subspace $W \subset \C^n$ of real dimension at least $2p$ is symplectic if and only if $\omega^p$ is nondegenerate. Also, for a symplectic subspace of dimension at least $2p$ the symplectic complement can be defined either in the usual way as $$W^{\perp} := \{ v \in \C^n | \omega(v,w)=0 \mbox{ for all } w \in W\}$$ or equivalently as $$W^{\perp} := \{ v \in \C^n | \omega^p(v,w_1, \dots, w_{2p-1})=0 \mbox{ for all } w_1, \dots, w_{2p-1} \in W\}.$$

Suppose then that $J$ is a complex structure preserving $\omega^p$ and $V$ is a $2$-dimensional symplectic plane. Then $V^{\perp} = W$ is symplectic and hence $\omega^p|_W$ is nondegenerate. As $J$ preserves $\omega^p$ we have that $\omega^p|_{JW}$ is also nondegenerate and therefore also symplectic, and using the definition of $(JW)^{\perp}$ only in terms of $\omega^p$ we see that $(JW)^{\perp}=JV$. Hence $JV$ is also a symplectic plane. Moreover, if $U$ is a symplectic plane in $V^{\perp}$ then $JU \subset J(V^{\perp}) = JW = (JV)^{\perp}$.

Let $x_1, y_1, \dots , x_n, y_n$ be a basis of $\C^n$ with $\omega(x_k, y_k)=1$ for all $k$ and such that the symplectic planes $V_k = \mbox{Span}(x_k, y_k)$ are orthogonal (for example we can take the standard symplectic basis). Then by the remark at the end of the previous paragraph the planes $JV_k$ are also symplectically orthogonal. Set $\lambda_k =  \omega(Jx_k, Jy_k)$. Since $J$ preserves $\omega^p$ we have that $\lambda_{i_1} \lambda_{i_2} \dots \lambda_{i_p} =1$ for all $1 \le i_1 < \dots < i_p \le n$. Hence either all $\lambda_k =1$ or all $\lambda_k = -1$. (The second case is only possible when $p$ is even.) It follows that $J$ is either symplectic or anti-symplectic.
 
\end{proof}
Notice that there exist exact $2$-K\"ahler structures on $3$-dimensional compact complex manifolds, that is balanced metrics $g$, such that $\omega^2$ is $d$-exact, where $\omega$ denotes the fundamental form of $g$. This is in contrast with the almost K\"ahler case, in view of Stokes Theorem. 

\begin{ex}{\em 
Let $G=SL(2,\C)$. Then $G$ admits compact quotients by uniform discrete subgroups $\Gamma$, so that 
$$
M=\Gamma\backslash G
$$
is a $3$-dimensional compact complex manifold. Denote by 
$$
Z_1=\frac12
\left[
\begin{array}{cc}
i & 0 \\ 
0 & -i
\end{array}
\right],\quad
Z_2=\frac12
\left[
\begin{array}{cc}
0 & -1 \\ 
1 & 0
\end{array}
\right]
\quad
Z_3=\frac12
\left[
\begin{array}{cc}
0 & i \\ 
i & 0
\end{array}
\right];
$$
then $\{Z_1,Z_2,Z_3\}$ is a basis of the Lie algebra of $G$. We have
$$
[Z_1,Z_2]=-Z_3,\quad [Z_1,Z_3]=Z_2,\quad [Z_2,Z_3]=-Z_1.
$$
Accordingly, the dual left-invariant coframe $\{\psi^1,\psi^2,\psi^3\}$ satisfies the Maurer-Cartan equations
$$
d\psi^1=\psi^2\wedge \psi^3,\quad d\psi^2=-\psi^1\wedge \psi^3,\quad d\psi^3=\psi^1\wedge \psi^2.
$$
Let 
$$
\Omega =\frac14(\psi^{12\bar{1}\bar{2}}+\psi^{13\bar{1}\bar{3}}+\psi^{23\bar{2}\bar{3}}),
$$
where we indicated $\psi^{r\bar{s}}=\psi^r\wedge\overline{\psi^s}$. Then,
\begin{equation}\label{Omega-exact}
\Omega=\frac18 d(\psi^{12\bar{3}}+\psi^{\bar{1}\bar{2}3}-
\psi^{13\bar{2}}-\psi^{\bar{1}\bar{3}2}+
\psi^{23\bar{1}}+\psi^{\bar{2}\bar{3}1}
):=d\gamma
\end{equation}
In \cite[p.467]{OUV} it is proved that every left-invariant $(2,2)$-form is $d$-exact (see also \cite{PT} for cohomological computations).
Let us define a complex curve of almost complex structures on $\Gamma\backslash G$ through a basis of $(1,0)$ forms by setting
$$
\psi_t^1=\psi^1,\qquad 
\psi_t^2=\psi^2,\qquad 
\psi_t^3=\psi^3-t\overline{\psi^3},\qquad 
$$
for $t\in \mathbb{B} (0,\varepsilon)$. A direct computation gives
\begin{equation}\label{mc-deformation}
\left\{
\begin{array}{lll}
d\psi^1_t&=&\frac{1}{1-\vert t\vert ^2}(\psi^{23}_t+t\psi^{2\bar{3}}_t)\\[5pt]
d\psi^2_t&=&-\frac{1}{1-\vert t\vert ^2}(\psi^{13}_t+t\psi^{1\bar{3}}_t)\\[5pt]
d\psi^3_t&=&\psi^{12}_t-t\psi^{\bar{1}\bar{2}}_t.
\end{array}
\right.
\end{equation}
In particular, the last equation shows that $J_t$ is not integrable for $t\neq 0$. Now we compute the action of $J_t$ on the forms $\psi^1,\psi^2,\psi^3$. A straightforward calculation yields to
$$\
J_t\psi^1=i\psi^1,\quad J_t\psi^2=i\psi^2,\quad
J_t\psi^3=\frac{i}{1-\vert t\vert^2}\big((1+\vert t\vert^2)\psi^3-2t\overline{\psi^3}\big)
$$
and 
$$
J_t\overline{\psi^1}=-i\overline{\psi^1},\quad J_t\overline{\psi^2}=-i\overline{\psi^2},\quad
J_t\overline{\psi^3}=\frac{i}{1-\vert t\vert^2}\big(-(1+\vert t\vert^2)\overline{\psi^3}+2\overline{t}\psi^3\big)
$$
Therefore, it immediate to check that 
$$
J_t\Omega=\Omega,
$$
so that $\Omega$ is $(2,2)$-with respect to $J_t$ for every $t$. \newline
Finally, there are no symplectic structures taming $J_t$, for every given $t\in \mathbb{B}(0,\varepsilon)$. By contradiction: assume that there exist a symplectic structure $\omega_t$ on $\Gamma\backslash SL(2,\C)$ taming $J_t$. Then, since for every $t$ the almost complex structure is left invariant, by an average process, we may produce a left invariant symplectic structure $\hat{\omega}$ on $\Gamma\backslash SL(2,\C)$, 
taming $J_t$. Let $\hat{\omega}$ be given as
$$
2\hat{\omega}=iA\psi_t^{1\bar{1}}+iB\psi_t^{2\bar{2}}+iC\psi_t^{3\bar{3}}+u\psi_t^{1\bar{2}}-\bar{u}\psi_t^{2\bar{1}} +v\psi_t^{1\bar{3}}-\bar{v}\psi_t^{3\bar{1}} +w\psi_t^{2\bar{3}}-\bar{w}\psi_t^{3\bar{2}},
$$
where $A,B,C,u,v,w\in\C$. Then, a direct calculation using \eqref{mc-deformation} gives that, if $\hat{\omega}$ is closed, then $C=0$. This is absurd.}
\end{ex}
Therefore, we have proved the following
\begin{prop}\label{fixedomega}
For any given $t\in\mathbb{B}(0,\varepsilon)\subset\C$, $(\Gamma\backslash SL(2,\C),J_t,\Omega)$ is a compact almost $2$-K\"ahler manifold of complex dimension $3$, such that:
\begin{enumerate}
 \item[i)] the almost complex structure $J_t$ is integrable if and only if $t=0$;
 \item[ii)] the almost $2$-K\"ahler structure $\Omega$ is $d$-exact; 
 \item[iii)] $J_t$ has no tamed symplectic structures for every given $t\in \mathbb{B}(0,\varepsilon)$.
\end{enumerate}
\end{prop}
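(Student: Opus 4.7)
My plan is to extract each of the three assertions essentially from the computations already laid out in the preceding Example, and then to package the taming argument cleanly.

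For (i), I would invoke the Newlander--Nirenberg criterion: the almost complex structure $J_t$ is integrable precisely when each $d\psi^i_t$ has vanishing $(0,2)$-part. Reading off \eqref{mc-deformation}, the forms $d\psi^1_t$ and $d\psi^2_t$ are of pure type, while $d\psi^3_t=\psi^{12}_t-t\psi^{\bar{1}\bar{2}}_t$ carries a non-trivial $(0,2)$-component $-t\psi^{\bar{1}\bar{2}}_t$ exactly when $t\ne 0$. Part (ii) is the content of \eqref{Omega-exact}, which explicitly exhibits $\Omega=d\gamma$ with $\gamma$ a real $2$-form. I would simply cite these two identities.

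For (iii), I would argue by contradiction. Assume some symplectic form $\omega$ on $\Gamma\backslash SL(2,\C)$ tames $J_t$. The first step is a symmetrization: since $SL(2,\C)$ is unimodular, a bi-invariant Haar measure descends to a finite invariant measure on the compact quotient, and averaging $L_g^*\omega$ against this measure produces a left-invariant real $2$-form $\hat\omega$. Because $d$ commutes with integration and $J_t$ is itself left-invariant, $\hat\omega$ is closed; because the taming condition is a pointwise positivity condition preserved under convex combinations, $\hat\omega$ still tames $J_t$ and is in particular symplectic. This reduces the problem to the Lie-algebra level, where $\hat\omega$ can be expanded in the left-invariant coframe exactly as in the displayed formula of the Example, with diagonal coefficients $A,B,C$ real and positive by positivity of the $(1,1)$-part.

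To reach the contradiction I would then apply $d$ to that expansion using \eqref{mc-deformation} and isolate a single monomial. My claim is that the coefficient of $\psi^{12\bar{3}}_t$ in $d\hat\omega$ is, up to a non-zero constant, exactly $iC$: among all the structure equations, only the $\psi^{12}_t$-term inside $d\psi^3_t$ can contribute to this monomial, while every other product of the form $d\psi^i_t\wedge\psi^{\bar{j}}_t$ or $\psi^i_t\wedge d\psi^{\bar{j}}_t$ either involves a repeated index or lies in a different $(p,q)$-type. Hence $d\hat\omega=0$ forces $C=0$, contradicting $C>0$. The only delicate point in this plan is the symmetrization step, which relies essentially on unimodularity of $SL(2,\C)$, together with the careful bookkeeping needed to confirm that no other term of the expansion contaminates the coefficient of $\psi^{12\bar{3}}_t$; once that is in place the conclusion is immediate.
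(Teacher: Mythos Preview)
Your proposal is correct and follows essentially the same route as the paper: items (i) and (ii) are read directly from \eqref{mc-deformation} and \eqref{Omega-exact}, and (iii) is the averaging-to-left-invariant argument followed by the observation that closedness forces $C=0$, exactly as the Example sketches. Two small slips worth fixing: $\gamma$ in \eqref{Omega-exact} is a $3$-form, not a $2$-form; and since the statement concerns \emph{taming} rather than compatibility, $\hat\omega$ may a priori have a $(2,0)+(0,2)$ part not captured by the displayed expansion---but one checks from \eqref{mc-deformation} that $d$ of any left-invariant $(2,0)$- or $(0,2)$-form contributes nothing to the monomial $\psi_t^{12\bar 3}$, so your isolation of the coefficient $iC$ and the contradiction $C=0$ remain valid.
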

We end this Section proving a result of non-existence of almost $p$-K\"ahler structures.
\begin{prop}\label{nop} Let $(M,J)$ be a closed almost complex manifold of (complex) dimension n.

Suppose $\alpha$ is a non closed $1$-form such that the $(1,1)$ part
$$(d\alpha)^{1,1} = \sum_k c_k \psi_k \wedge \bar{\psi_k}$$
where the $\psi_k$ are $(1,0)$-covectors and the $c_k$ have the same sign. Then $(M,J)$ does not have a balanced metric.

More generally, suppose there exists a non closed $(2n-2p-1)$-form $\beta$ such that
$$(d\beta)^{n-p,n-p} = \sum_k c_k \psi_k \wedge \bar{\psi_k}$$
where the $\psi_k$ are simple $(n-p,0)$-covectors and the $c_k$ have the same sign. Then $(M,J)$ does not admit an almost $p$-K\"{a}hler form.
\end{prop}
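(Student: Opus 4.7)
The plan is to argue by contradiction via Stokes' theorem, exploiting the fact that a closed transverse $(p,p)$-form pairs positively against any positive combination of simple $(n-p,n-p)$-volumes of the form $\sigma_{n-p}\psi\wedge\bar\psi$. I will only write out the more general second statement, the first being the special case $p = n-1$, $\beta = \alpha$.

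First I would suppose, for contradiction, that $\Omega$ is an almost $p$-K\"ahler form on $M$. Since $\Omega$ is of pure type $(p,p)$ and $d$-closed, and $\beta$ has odd degree $2n-2p-1$, the Leibniz rule gives
\[
d(\Omega\wedge\beta) = d\Omega\wedge\beta + \Omega\wedge d\beta = \Omega\wedge d\beta.
\]
Since $M$ is closed, Stokes' theorem yields $\int_M \Omega\wedge d\beta = 0$. Because $\Omega$ has pure type $(p,p)$ and $\Omega\wedge d\beta$ must be of top degree $(n,n)$ to contribute to the integral, only the $(n-p,n-p)$-component of $d\beta$ survives, so
\[
\int_M \Omega \wedge (d\beta)^{n-p,n-p} = 0.
\]

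Next I would substitute the hypothesized decomposition and use the transversality of $\Omega$. Writing $(d\beta)^{n-p,n-p} = \sum_k c_k \psi_k \wedge \bar\psi_k$ with $\psi_k$ simple of type $(n-p,0)$ and the $c_k$ of one common sign (pointwise), we can rewrite each term up to the constant factor $\sigma_{n-p}^{-1}$ as
\[
c_k\,\Omega\wedge \psi_k\wedge\bar\psi_k = \sigma_{n-p}^{-1}\,c_k\,\bigl(\Omega\wedge \sigma_{n-p}\psi_k\wedge\bar\psi_k\bigr).
\]
By transversality of $\Omega$, each $\Omega\wedge \sigma_{n-p}\psi_k\wedge\bar\psi_k$ is a nonnegative multiple of a fixed volume form, and is strictly positive wherever $\psi_k\neq 0$. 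Since all $c_k$ share the same sign, no cancellation between terms can occur, and the integrand $\Omega\wedge (d\beta)^{n-p,n-p}$ has a pointwise definite sign.

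Finally I would conclude the contradiction from non-closedness of $\beta$. The hypothesis that $\beta$ is not closed, combined with the decomposition being the full $(n-p,n-p)$-part (not identically zero), ensures $(d\beta)^{n-p,n-p}$ is nonzero at some point of $M$, hence on an open set by continuity. On that open set the integrand is strictly of one sign, while elsewhere it is of the same sign or zero; therefore $\int_M \Omega\wedge(d\beta)^{n-p,n-p}$ is strictly positive or strictly negative, contradicting its vanishing established above. The only genuinely delicate point is the final step: one must verify that the same-sign hypothesis is strong enough to prevent pointwise cancellation between different summands, and that the non-closedness genuinely forces $(d\beta)^{n-p,n-p}\not\equiv 0$ under the stated form of the decomposition. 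Both follow from transversality of $\Omega$ and the interpretation of the displayed decomposition as a (semi-)positivity statement on the $(n-p,n-p)$-component of $d\beta$.
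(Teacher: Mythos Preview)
Your argument is correct and follows essentially the same route as the paper's own proof: assume an almost $p$-K\"ahler form $\Omega$ exists, apply Stokes to $\Omega\wedge\beta$, and use transversality to force the resulting integral $\sum_k c_k\int_M \Omega\wedge\sigma_{n-p}\psi_k\wedge\bar\psi_k$ to be strictly of one sign. The paper's version is terser (it absorbs your type-counting step and simply writes $0=\int_M\sigma_{n-p}d(\Omega\wedge\beta)>0$), while you spell out why only the $(n-p,n-p)$ component of $d\beta$ survives and flag the implicit nontriviality assumption on that component; both are the same argument.
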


\begin{proof} It suffices to prove the second statement. Without loss of generality, we may assume that all the $c_k$ are positive. Arguing by contradiction, suppose that $\Omega$ is a $p$-K\"{a}hler form and $\beta$ is a non-zero $(2n-2p-1)$-form as above. Then since $M$ is closed we have
$$0 = \int_M \sigma_{n-p} d(\Omega \wedge \beta) = \sum_k c_k \int_M \Omega \wedge \sigma_{n-p} \psi_k \wedge \bar{\psi_k} >0$$
as all integrals on the right are strictly positive. This gives a contradiction.
\end{proof}

\begin{rem} All Riemann surfaces are almost K\"{a}hler. Therefore if a $1$-form $\alpha$ as in Proposition \ref{nop} exists, it must restrict to a closed form on all $1$-dimensional subvarieties of $M$.
\end{rem}
As an application of Proposition \ref{nop}, we provide a family of $n$-dimensional compact complex manifolds which are not $p$-K\"ahler, for any given $1\leq p\leq (n-1)$. 
\begin{ex}{\em 
Let 
$$
\mathbb{H}_{2n-1}(\R):=\Big\{
A=\left [
\begin{array}{lll}
1 & X & v\\
0 & I_{n-1} & Y\\
0 & 0 &1
\end{array}
\right]\,\,\,\vert\,\,\, X^t,Y\in\R^{n-1},\,v\in\R
\Big\}
$$
be the $(2n-1)$-dimensional real Heisenberg group, where $I_{n-1}$ denotes the identity matrix of order $n-1$. Then $\mathbb{H}_{2n-1}(\R)$ is $(2n-1)$-dimensional nilpotent Lie group and the subset
$\Gamma\subset\mathbb{H}_{2n-1}(\R)$, formed by matrices having integers entries, is a uniform discrete subgroup of $\mathbb{H}_{2n-1}(\R)$, so that $\Gamma\backslash\mathbb{H}_{2n-1}(\R)$ is a comapct $(2n-1)$-dimensional nilmanifold. Then 
$$
M=\Gamma\backslash\mathbb{H}_{2n-1}(\R)\times \R\slash\Z
$$
is a $2n$-dimensional compact nilmanifold having a global coframe $\{e^1,\ldots,e^n,f^1,\ldots,f^n\}$, defined as 
$$
\begin{array}{lll}
e^{\alpha}=dx^\alpha,&1\leq\alpha \leq n-1, & e^n=du\\
f^{\alpha}=dy^\alpha,&1\leq\alpha \leq n-1, & f^n=dv-\displaystyle\sum_{\beta=1}^{n-1}x^\beta dy^\beta ,
\end{array}
$$
where $u$ denotes the natural coordinate on $\R$. It is immediate to check that
\begin{equation}
\left\{
\begin{array}{ll}
de^{\alpha}=0 & 1\leq\alpha \leq n\\[5pt]
df^{\beta}=0 & 1\leq\beta \leq n-1\\[5pt]
df^{n}=-\displaystyle\sum_{\gamma =1}^{n-1}e^\gamma\wedge f^\gamma & 
\end{array}
\right. 
\end{equation}
Then 
$$
\varphi^\alpha=e^\alpha +if^\alpha,\qquad 1\leq\alpha\leq n
$$
give rise to a complex coframe of $(1,0)$-forms on $M$ such that 
$$
\left\{
\begin{array}{ll}
d\varphi^{\alpha}=0 & 1\leq\alpha \leq n-1\\[5pt]
d\varphi^n=\displaystyle\frac12\sum_{\beta =1}^{n-1}\varphi^\beta\wedge\overline{\varphi^\beta} & 
\end{array}
\right.
$$
so that induced almost complex structure $J$ is integrable,  
Fix any $1\leq p\leq (n-1)$. We show that $(M,J)$ is not $p$-K\"ahler. Define
$$
\beta=\varphi^n\wedge\varphi^{1\overline{1}\ldots (n-p-1)\overline{(n-p-1)}}.
$$
Then,
$$
d\beta=(d\beta)^{n-p,n-p}=\Big(\sum_{\beta=1}^{n-1}\varphi^{\beta\overline{\beta}}\Big)
\wedge\varphi^{1\overline{1}\ldots (n-p-1)\overline{(n-p-1)}}
$$
and the result follows from Proposition \ref{nop}.
\newline 
Note that the smooth manifold $M$ does not carry any K\"ahler structure $(J,g,\omega)$, since it is a non toral nilmanifold.
}
\end{ex}

\section{Semi-K\"ahler deformations of balanced metrics}
\label{main}
Let $M=\C^n$ endowed with a balanced structure $(J,g,\omega)$, that is $J$ is a complex structure on $\C^n$, $g$ is a Hermitian metric such that the fundamental form $\omega$ of $g$ satisfies $d\omega^{n-1}=0$.
Denote by $\{\varphi^1,\ldots,\varphi^{n}\}$ be a complex $(1,0)$-coframe on $(\C^n,J)$, and let  
$$
\omega=\frac{i}{2}\sum_{j,k=1}^n\omega_{jk}(z)\varphi^j\wedge\overline{\varphi^k},
$$
be the fundamental form of $g$, where $(\omega_{jk}(z))$ is Hermitian and positive definite.
Then, assuming $\omega$ is a balanced metric,
$$\Omega:=\frac{1}{(n-1)!}\omega^{n-1}$$
is $d$-closed. By assumption $d\Omega=0$. Denote by $\{\zeta_1,\ldots,\zeta_n\}$ the dual $(1,0)$-frame of $\{\varphi^1,\ldots,\varphi^{n}\}$. Let $I=(-\varepsilon,\varepsilon)$ and let $\{J_t\}_{t\in I}$ be a smooth curve of almost complex structures on $\C^n$ such that $J_0=J$.
Then, as already recalled in Section \ref{complex-curves}, there exists a unique $L_t\in\hbox{\rm End}(T\C^n)$ such that $L_tJ+JL_t=0$ and 
$$
J_t=(I+L_t)J(I+L_t)^{-1}.
$$
Then, $L_t$ can be identified with an element $\Phi(t)\in \Gamma(\C^n,\Lambda^{0,1}\C^n\otimes T^{1,0}\C^n)$, by defining
$$
\Phi(t)=\frac12(L_t-iL_t).
$$
In other words, the curve of almost complex 
structures $\{J_t\}_{t\in I}$ is encoded by such a $\Phi(t)\in \Gamma(\C^n,\Lambda^{0,1}\C^n\otimes T^{1,0}\C^n)$ so that, if the expression of $\Phi(t)$ is  
$$
\Phi(t)=\sum_{h,k=1}^n\sigma^j_k(z,t)\overline{\varphi^k}\otimes\zeta_j,
$$
with $\sigma^j_k=\sigma^j_k(z,t)$ smooth on $(z,t)$, then 
a complex $(1,0)$-coframe on $(\C^n,J_t)$ is given by 
$$
\varphi^j_t=\varphi^j-\langle \Phi(t),\zeta_j\rangle,\quad j=1,\ldots,n,
$$
where 
$$
\langle \Phi(t),\zeta_j\rangle=\sum_{k=1}^n\sigma^j_k\overline{\varphi^k}
$$
Then, explicitly
\begin{equation}\label{def-phi-t}
\varphi^j_t=\varphi^j-\sum_{k=1}^n\sigma^j_k\overline{\varphi^k},\quad \quad j=1,\ldots,n.
\end{equation}
According to the Kodaira and Spencer theory of small deformations of complex structures (see \cite{MK}, \cite{GHJ}) $J_t$ is integrable if and only if the Maurer-Cartan equation holds, that is 
$$
\delbar\Phi(t)+\frac12[[\Phi(t),\Phi(t)]]=0.
$$
Here, we are not assuming that $J_t$ is integrable. Let $(J_t,g_t,\omega_t)$ be a curve of almost Hermitian metrics on $\C^n$ such that $(J_0,g_0,\omega_0)=(J,g,\omega)$. Then,
\begin{equation}\label{def-omega-t-prime}
\omega_t:=\frac{i}{2}\sum_{j,k=1}^n\omega_{jk}(z,t)\varphi_t^j\wedge\overline{\varphi_t^k};
\end{equation}
where $\omega_{jk}(z,t)$ are smooth and $\omega_{jk}(z,0)=\omega_{jk}(z)$, $j,k=1,\ldots, n$. 
\newline
In the sequel we will use the symbol $\dot{}$ to denote the derivative with respect to $t$, e.g., we will use the following notation
$$
\dot{\varphi}^j_0=\frac{d}{dt}{\varphi}^j_t\vert_{t=0}
$$
and we will drop $0$ for the $t$ derivative of the functions $\sigma^j_k(z_1,\ldots,z_n,t)$ evaluated at $t=0$, that is 
$$
\dot{\sigma^j_k}=\frac{d}{dt}
\sigma^j_k(z_1,\ldots,z_n,t)\vert_{t=0}.
$$
Set 
\begin{equation}\label{def-Omega-t}
\Omega_t:=\frac{1}{(n-1)!}\omega_t^{n-1}.
\end{equation}
 Let us compute the $t$ derivative of $\Omega_t$ at $t=0$. 
In view of \eqref{def-phi-t}, we easily compute
$$
\dot{\varphi}^j_0=-\sum_{k=1}^n\dot{\sigma^j_k}\overline{\varphi^k},
$$
by the definition of $\Omega_t$, we obtain that
$$\dot{\Omega}_0\in A^{n,n-2}\C^n\oplus A^{n-1,n-1}\C^n\oplus A^{n-2,n}\C^n,\qquad \dot{\Omega}_0=\overline{\dot{\Omega}}_0.
$$
Consequently, we can define the $(n-2,n)$-form $\eta$ and the $(n-1,n-1)$-form $\lambda$ on $(\C^n,J_t)$ respectively by
\begin{equation}\label{derivative-Omega-t-prime}
\eta:=(\dot{\Omega}_0)^{n-2,n} 
\end{equation}
and 
\begin{equation}\label{derivative-n-1-Omega-t-prime}
\lambda:=(\dot{\Omega}_0)^{n-1,n-1}. 
\end{equation}
We have
\begin{equation}\label{Omega-eta-prime}
\dot{\Omega}_0=\eta+\overline{\eta}+\lambda.
\end{equation}
We are ready to state the following
\begin{theorem}\label{m-theorem-prime}
Let $(J,g,\omega)$ be a balanced structure on $\C^n$. Let $(J_t,g_t,\omega_t)$, for $t\in I$, be a curve of almost Hermitian metrics on $\C^n$ such that $(J_0,g_0,\omega_0)=(J,g,\omega)$. If $(J_t,g_t,\omega_t)$ is a curve of semi-K\"ahler structures on $\C^n$, then 
\begin{equation}\label{del-eta-prime}
\del\eta+\delbar\lambda=0.
\end{equation}
\end{theorem}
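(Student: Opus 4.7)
The plan is to simply differentiate the semi-Kähler condition $d\Omega_t = 0$ at $t=0$ and then project onto the appropriate bidegree with respect to $J_0 = J$.

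First I would observe that since $(J_t,g_t,\omega_t)$ is a curve of semi-Kähler structures, by the very definition (semi-Kähler means $d\omega_t^{n-1}=0$) and the normalization \eqref{def-Omega-t}, we have $d\Omega_t = 0$ for every $t\in I$. The exterior derivative $d$ does not depend on the almost complex structure, so we may differentiate this identity in $t$ and evaluate at $t=0$ to obtain $d\dot{\Omega}_0 = 0$.

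Next I would decompose by bidegree with respect to $J_0 = J$, which is integrable, so that $d = \partial + \delbar$. Using the splitting \eqref{Omega-eta-prime}, namely $\dot{\Omega}_0 = \eta + \overline{\eta} + \lambda$ with $\eta \in A^{n-2,n}(\C^n)$ and $\lambda \in A^{n-1,n-1}_{\R}(\C^n)$, I expand
\begin{equation*}
0 = d\dot{\Omega}_0 = \partial\eta + \delbar\eta + \partial\overline{\eta} + \delbar\overline{\eta} + \partial\lambda + \delbar\lambda.
\end{equation*}
Since $\delbar\eta \in A^{n-2,n+1} = 0$ and $\partial\overline{\eta} \in A^{n+1,n-2} = 0$, the surviving terms split into two conjugate bidegrees: the $(n-1,n)$-part gives $\partial\eta + \delbar\lambda = 0$, which is precisely \eqref{del-eta-prime}; the $(n,n-1)$-part gives $\delbar\overline{\eta} + \partial\lambda = 0$, which is just the complex conjugate of the former (using that $\lambda$ is real).

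There is essentially no obstacle here beyond the bookkeeping of bidegrees: the result follows directly from differentiating the closedness of $\Omega_t$ and projecting onto type $(n-1,n)$ at $t=0$. The only subtlety worth flagging is that the decomposition into $\eta$, $\overline{\eta}$, $\lambda$ is taken with respect to the fixed structure $J_0$ (as per \eqref{derivative-Omega-t-prime}--\eqref{derivative-n-1-Omega-t-prime}), and it is the integrability of $J_0$ that legitimizes writing $d = \partial + \delbar$ in the calculation above.
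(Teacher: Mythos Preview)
Your proof is correct and follows essentially the same approach as the paper: differentiate $d\Omega_t=0$ at $t=0$, expand $d\dot{\Omega}_0=(\partial+\delbar)(\eta+\overline{\eta}+\lambda)$ using the integrability of $J_0$, and then read off the $(n-1,n)$-component. Your write-up is in fact slightly more explicit than the paper's in spelling out why $\delbar\eta$ and $\partial\overline{\eta}$ vanish for bidegree reasons.
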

\begin{proof}
By definition, $\Omega_t:=\frac{1}{(n-1)!}\omega_t^{n-1}$ and by assumption, 
\begin{equation}\label{d-Omega-t-prime}
d\Omega_t=0.
\end{equation}
Thus, by taking the derivative of \eqref{d-Omega-t-prime} with respect to $t$ evaluated at $t=0$ and taking into account \eqref{Omega-eta-prime}, we obtain
$$
0=d\dot{\Omega}_0=d(\eta+\overline{\eta}+\lambda)=(\del+\delbar)(\eta+\overline{\eta}+\lambda)=\del\eta+\delbar\overline{\eta}+\del\lambda+\delbar\lambda,
$$
where we have used that $J_t$ is integrable for $t=0$ and that $\eta\in A^{n-2,n}\C^n$. Therefore, the above equation
$$
\del\eta+\delbar\overline{\eta}+\del\lambda+\delbar\lambda=0
$$
turns to be equivalent, by type reasons, to 
$$
\del\eta+\delbar\lambda=0,
$$
that is, if $\Omega_t$ is $d$-closed, then \eqref{del-eta-prime} holds. The Theorem is proved.
\end{proof}
Let $M$ be a compact holomorphically parallelizable complex manifold. Then, by a Theorem of Wang (see \cite{W}), there exists a simply-connected, connected complex Lie group $G$ and a lattice $\Gamma\subset G$ such that 
$M=\Gamma\backslash G$. Assume that $M$ is a solvmanifold, that is $G$ is solvable. Then, according to Nakamura \cite[Prop. 1.4]{N}, the universal covering of $M$ is biholomorphically equivalent to $\C^N$. Due to Abbena and Grassi \cite[Theorem 3.5]{AG}, the natural complex structure $J$ on $M$ admits a balanced metric $g$. 
As a direct consequence of Theorem \ref{m-theorem-prime}, we get the following (see also Sferruzza \cite{Sf})
\begin{cor}\label{solvable-necessary.condition}
Let $M=\Gamma\backslash\C^N$ be a compact complex solvmanifold endowed with the natural balanced structure $(J,g,\omega)$. If $(J_t,g_t,\omega_t)$ is a curve of semi-K\"ahler structures on $M$ such 
that $(J_0,g_0,\omega_0)=(J,g,\omega)$, then 
\begin{equation}\label{cohomological-prime}
0=[\del\eta]_{\delbar}\in H_{\delbar}^{N-1,N}(M).
\end{equation}
\end{cor}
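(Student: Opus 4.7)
The plan is to derive this corollary as a direct consequence of Theorem \ref{m-theorem-prime}. First I would verify that the hypotheses of that theorem apply: since $M=\Gamma\backslash\C^N$, the universal cover is $\C^N$, and the natural balanced structure $(J,g,\omega)$ on $M$ lifts to a $\Gamma$-invariant balanced structure on $\C^N$. Any smooth deformation $(J_t,g_t,\omega_t)$ on $M$ lifts to a $\Gamma$-invariant deformation on $\C^N$ with $J_0=J$ integrable. The endomorphism $L_t$, its associated tensor $\Phi(t)\in \Gamma(\C^N,\Lambda^{0,1}\C^N\otimes T^{1,0}\C^N)$, and the coframe $\{\varphi^j_t\}$ from \eqref{def-phi-t} are all obtained by $\Gamma$-equivariant constructions from the deformation, so the bidegree components $\eta:=(\dot{\Omega}_0)^{N-2,N}$ and $\lambda:=(\dot{\Omega}_0)^{N-1,N-1}$ are $\Gamma$-invariant and descend to well-defined forms on $M$.

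Next I would invoke Theorem \ref{m-theorem-prime}: since $(J_t,g_t,\omega_t)$ is a curve of semi-K\"ahler structures (i.e.\ $d\Omega_t=0$), the theorem gives
\[
\del\eta+\delbar\lambda=0
\]
on $\C^N$, and by $\Gamma$-invariance this identity descends to $M$. Rewriting it as $\del\eta=-\delbar\lambda$, we see that $\del\eta\in A^{N-1,N}(M)$ is $\delbar$-exact. Therefore its Dolbeault class vanishes:
\[
[\del\eta]_{\delbar}=0\in H^{N-1,N}_{\delbar}(M),
\]
which is exactly \eqref{cohomological-prime}.

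Essentially no obstacle arises: the entire argument is a direct transcription of Theorem \ref{m-theorem-prime} through the quotient map $\C^N\to M$. The only bookkeeping point is to make explicit that $\eta$ and $\lambda$ are well-defined on $M$ (which uses both the solvmanifold structure $M=\Gamma\backslash\C^N$ and the integrability of $J=J_0$, needed to make the bidegree decomposition at $t=0$ unambiguous). Once this is noted, the cohomological conclusion is automatic from the pointwise identity produced by Theorem \ref{m-theorem-prime}.
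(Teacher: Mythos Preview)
Your proposal is correct and follows the same route as the paper, which simply presents the corollary ``as a direct consequence'' of Theorem \ref{m-theorem-prime}: from $\del\eta+\delbar\lambda=0$ one reads off $\del\eta=-\delbar\lambda$ and hence $[\del\eta]_{\delbar}=0$. Your additional care in lifting to $\C^N$ and checking $\Gamma$-invariance of $\eta,\lambda$ just makes explicit a passage the paper leaves implicit.
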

Coming back to $\C^n$, in the particular case that   
\begin{equation}\label{def-omega-t}
\omega_t:=\frac{i}{2}\sum_{j=1}^n\varphi_t^j\wedge\overline{\varphi_t^j},
\end{equation}
we obtain the following 
\begin{prop}\label{m-theorem}
Let $(J,g,\omega)$ be a balanced structure on $\C^n$. Let $\{J_t\}_{t\in I}$ be a smooth curve of almost complex structures on $\C^n$ such that $J_0=J$. Let $\omega_t$ be the real $J_t$-positive $(1,1)$-form defined by \eqref{def-omega-t} and $g_t$ be the associated $J_t$-Hermitian metric on $\C^n$. If $(J_t,g_t,\omega_t)$ is a curve of semi-K\"ahler structures on $\C^n$, then 
\begin{equation}\label{del-eta}
\del\eta=0.
\end{equation}
\end{prop}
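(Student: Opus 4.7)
The plan is to leverage Theorem \ref{m-theorem-prime} as a black box: it already yields $\partial\eta+\delbar\lambda=0$ for \emph{any} curve of semi-K\"ahler metrics $(J_t,g_t,\omega_t)$ whose underlying almost complex structures are the given $J_t$. Hence, to upgrade $\partial\eta+\delbar\lambda=0$ to $\partial\eta=0$, it suffices to exhibit one feature special to the choice \eqref{def-omega-t}, namely to show that with this specific $\omega_t$ the $(n-1,n-1)$-component $\lambda$ of $\dot{\Omega}_0$ vanishes identically.

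The core step is a first-order expansion. Since $J_0=J$ we have $\sigma^j_k(z,0)=0$, so from \eqref{def-phi-t}
\[
\dot{\varphi}^j_0=-\sum_{k=1}^n\dot{\sigma}^j_k\,\overline{\varphi^k},\qquad \dot{\overline{\varphi^j}}_0=-\sum_{k=1}^n\overline{\dot{\sigma}^j_k}\,\varphi^k,
\]
so $\dot\varphi^j_0$ is of type $(0,1)$ and its conjugate is of type $(1,0)$. Differentiating \eqref{def-omega-t} at $t=0$ and using the Leibniz rule, the coefficients $\omega_{jk}(z,t)=\delta_{jk}$ being constant in $t$, I obtain
\[
\dot\omega_0=\frac{i}{2}\sum_{j=1}^n\Bigl(\dot{\varphi}^j_0\wedge\overline{\varphi^j}+\varphi^j\wedge\dot{\overline{\varphi^j}}_0\Bigr)
=-\frac{i}{2}\sum_{j,k}\Bigl(\dot{\sigma}^j_k\,\overline{\varphi^k}\wedge\overline{\varphi^j}+\overline{\dot{\sigma}^j_k}\,\varphi^j\wedge\varphi^k\Bigr),
\]
which is of pure type $(2,0)+(0,2)$. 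In particular $(\dot\omega_0)^{1,1}=0$.

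Next I differentiate $\Omega_t=\frac{1}{(n-1)!}\omega_t^{n-1}$ at $t=0$ to get
\[
\dot{\Omega}_0=\frac{1}{(n-2)!}\,\omega^{n-2}\wedge\dot\omega_0.
\]
Because $\omega^{n-2}$ has type $(n-2,n-2)$ and $\dot\omega_0$ has no $(1,1)$-part, the wedge product lives in $A^{n,n-2}\oplus A^{n-2,n}$, with no $(n-1,n-1)$-component. Comparing with \eqref{Omega-eta-prime} this forces $\lambda=(\dot{\Omega}_0)^{n-1,n-1}=0$. Plugging into Theorem \ref{m-theorem-prime} gives $\partial\eta=0$, as required. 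No serious obstacle is anticipated: the proof is essentially a type-decomposition computation, and the only place to be careful is verifying that the $(1,1)$-part of $\dot\omega_0$ truly vanishes, which is precisely where the hypothesis $\omega_{jk}=\delta_{jk}$ in \eqref{def-omega-t} (rather than the more general \eqref{def-omega-t-prime}) is used.
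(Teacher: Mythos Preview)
Your proof is correct and follows exactly the paper's approach: the paper's own argument is the single line ``The proof follows at once from Theorem \ref{m-theorem-prime} by noting that in such a case $\lambda=0$,'' and you have simply supplied the explicit type-decomposition computation verifying $\lambda=0$. The key observation---that with $\omega_{jk}=\delta_{jk}$ the derivative $\dot\omega_0$ has pure type $(2,0)+(0,2)$, forcing $(\dot\Omega_0)^{n-1,n-1}=0$---is precisely what the paper asserts without detail.
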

The proof of the above Proposition follows at once from Theorem \ref{m-theorem-prime} by noting that in such a case $\lambda=0$.\newline 
Finally, under the same assumptions as in the last Proposition \ref{m-theorem}, for $n=3$, we derive the following
\begin{cor}\label{m-cor-3}
Let $(J,g,\omega)$ be a balanced structure on $\C^3$. Let $\{J_t\}_{t\in I}$ be a smooth curve of almost complex structures on $\C^3$ such that $J_0=J$. Let $\omega_t$ be the real $J_t$-positive $(1,1)$-form defined by \eqref{def-omega-t} and $g_t$ be the associated $J_t$-Hermitian metric on $\C^3$. If $(J_t,g_t,\omega_t)$ is a curve of semi-K\"ahler structures on $\C^3$, then 
\begin{equation}\label{del-eta-3}
\del \left(\left[(\dot{\sigma}^2_3-\dot{\sigma}^3_2)\varphi^1 +(\dot{\sigma}^3_1-\dot{\sigma}^1_3)\varphi^2+(\dot{\sigma}^1_2-\dot{\sigma}^2_1)\varphi^3 \right]\wedge\varphi^{\bar{1}\bar{2}\bar{3}}\right)=0.
\end{equation} 
\end{cor}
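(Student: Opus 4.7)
The plan is to specialize Proposition \ref{m-theorem} to $n=3$ and identify the $(1,3)$-form $\eta$ explicitly in terms of the deformation data $\dot\sigma^j_k$; once $\eta$ is shown to be a nonzero scalar multiple of the bracketed expression in \eqref{del-eta-3} wedged with $\varphi^{\bar 1\bar 2\bar 3}$, the conclusion $\del\eta=0$ from Proposition \ref{m-theorem} will be precisely the desired identity. So the whole argument is bookkeeping: compute $\eta$, match it to the target, quote Proposition \ref{m-theorem}.

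First I would write $\Omega_t=\frac{1}{2}\omega_t^2$ and differentiate at $t=0$ to obtain $\dot\Omega_0=\omega_0\wedge\dot\omega_0$, so the $(1,3)$-component is
$$\eta \;=\; (\dot\Omega_0)^{1,3} \;=\; \omega_0\wedge(\dot\omega_0)^{0,2}.$$
Next, I would compute $\dot\omega_0$ from the defining formula \eqref{def-omega-t}. Because the coefficients of $\omega_t$ in the basis $\varphi_t^j\wedge\overline{\varphi_t^k}$ are the constants $\delta_{jk}$, differentiation only hits the coframe; using $\dot\varphi_0^j=-\sum_k\dot\sigma^j_k\,\overline{\varphi^k}$ one finds that $\dot\omega_0$ has no $(1,1)$-part (since $\dot\varphi_0^j$ is $(0,1)$ and $\overline{\dot\varphi_0^j}$ is $(1,0)$), and that its $(0,2)$-part reads
$$(\dot\omega_0)^{0,2} \;=\; -\tfrac{i}{2}\sum_{j,k}\dot\sigma^j_k\,\overline{\varphi^k}\wedge\overline{\varphi^j}.$$
Antisymmetrizing the double sum in $j<k$ produces exactly the three coefficients $\dot\sigma^j_k-\dot\sigma^k_j$ that appear inside the brackets of \eqref{del-eta-3}.

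Then I would wedge with $\omega_0=\frac{i}{2}\sum_\ell\varphi^\ell\wedge\overline{\varphi^\ell}$. For each antisymmetrized pair $(j,k)$ with $j<k$ in $\{1,2,3\}$, only the term indexed by the third index $\ell$ survives, because the wedge of $\overline{\varphi^\ell}$ with $\overline{\varphi^k}\wedge\overline{\varphi^j}$ vanishes whenever $\ell\in\{j,k\}$. After reordering the three antiholomorphic covectors into the canonical $\varphi^{\bar 1\bar 2\bar 3}$, a short sign check converts the three antisymmetrized coefficients into the precise cyclic pattern $(\dot\sigma^2_3-\dot\sigma^3_2,\ \dot\sigma^3_1-\dot\sigma^1_3,\ \dot\sigma^1_2-\dot\sigma^2_1)$, yielding
$$\eta \;=\; \tfrac14\bigl[(\dot\sigma^2_3-\dot\sigma^3_2)\varphi^1+(\dot\sigma^3_1-\dot\sigma^1_3)\varphi^2+(\dot\sigma^1_2-\dot\sigma^2_1)\varphi^3\bigr]\wedge\varphi^{\bar 1\bar 2\bar 3}.$$

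Finally, Proposition \ref{m-theorem} applied to the semi-K\"ahler curve $(J_t,g_t,\omega_t)$ gives $\del\eta=0$, which, after discarding the nonzero scalar $\tfrac14$, is exactly \eqref{del-eta-3}. The only delicate point in the proof is the sign bookkeeping when permuting $\overline{\varphi^\ell}$ past $\overline{\varphi^j}\wedge\overline{\varphi^k}$ to form $\varphi^{\bar 1\bar 2\bar 3}$, so as to match the cyclic ordering $(2,3),(3,1),(1,2)$ prescribed in the statement; this is routine but must be carried out carefully, and is the step most prone to algebraic error.
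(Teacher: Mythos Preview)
Your approach is exactly the (implicit) one in the paper: the corollary is stated without proof as the $n=3$ specialization of Proposition \ref{m-theorem}, and your computation of $\eta=(\dot\Omega_0)^{1,3}=\omega_0\wedge(\dot\omega_0)^{0,2}$ is the intended one. One small remark: carrying the sign bookkeeping through carefully gives $\eta=-\tfrac14\bigl[(\dot\sigma^2_3-\dot\sigma^3_2)\varphi^1+(\dot\sigma^3_1-\dot\sigma^1_3)\varphi^2+(\dot\sigma^1_2-\dot\sigma^2_1)\varphi^3\bigr]\wedge\varphi^{\bar 1\bar 2\bar 3}$ rather than $+\tfrac14$, but as you note this is immaterial since the scalar is nonzero.
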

\begin{cor}\label{m-cor-3-first}
In the same assumptions as in the Corollary \ref{m-cor-3}, under the additinal assumption that $\varphi^j=dz_j, j=1,2,3$, if $(J_t,g_t,\omega_t)$ is a curve of semi-K\"ahler structures on $\C^3$, then
the following equations hold
\begin{equation}\label{necessary-condition-3}
\left\{
\begin{array}{l}
\frac{\partial}{\partial z_1}(\dot{\sigma}^3_1-\dot{\sigma}^1_3)-\frac{\partial}{\partial z_2}(\dot{\sigma}^2_3-\dot{\sigma}^3_2)=0\\[10pt] 
\frac{\partial}{\partial z_1}(\dot{\sigma}^1_2-\dot{\sigma}^2_1)-\frac{\partial}{\partial z_2}(\dot{\sigma}^2_3-\dot{\sigma}^3_2)=0\\[10pt]
\frac{\partial}{\partial z_2}(\dot{\sigma}^1_2-\dot{\sigma}^2_1)-\frac{\partial}{\partial z_3}(\dot{\sigma}^3_1-\dot{\sigma}^1_3)=0
\end{array}
\right. 
\end{equation}
\end{cor}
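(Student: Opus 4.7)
My plan is to unwind the conclusion of Corollary \ref{m-cor-3} under the simplifying hypothesis $\varphi^j = dz_j$. In this holomorphic chart each $dz_k$ and each $d\bar z_k$ is $d$-closed, hence in particular $\del$-closed, so the outer operator $\del$ in \eqref{del-eta-3} acts only on the three scalar coefficients
$$A_1 := \dot{\sigma}^2_3-\dot{\sigma}^3_2,\qquad A_2 := \dot{\sigma}^3_1-\dot{\sigma}^1_3,\qquad A_3 := \dot{\sigma}^1_2-\dot{\sigma}^2_1.$$

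First I would rewrite \eqref{del-eta-3} as
$$\sum_{k=1}^{3}\del A_k \wedge dz_k \wedge d\bar z_1 \wedge d\bar z_2 \wedge d\bar z_3 = 0,$$
and then expand $\del A_k = \sum_j (\partial A_k/\partial z_j)\,dz_j$. Any $j=k$ contribution drops out by $dz_j\wedge dz_j=0$, so what remains is a linear combination of the three ordered $(2,3)$-basis forms
$$dz_1\wedge dz_2\wedge d\bar z_1\wedge d\bar z_2\wedge d\bar z_3,\qquad dz_1\wedge dz_3\wedge d\bar z_1\wedge d\bar z_2\wedge d\bar z_3,\qquad dz_2\wedge dz_3\wedge d\bar z_1\wedge d\bar z_2\wedge d\bar z_3,$$
which span $\Lambda^{2,3}(\C^3)$.

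After reordering each $dz_j\wedge dz_k$ into increasing index order (which introduces a single sign flip when $j>k$), the coefficients of these three basis forms become, up to an overall sign, the three scalar expressions on the left of \eqref{necessary-condition-3}. Linear independence of the basis $(2,3)$-forms then forces each coefficient to vanish, giving the corollary. The only step requiring care is the sign bookkeeping from the anticommutativity $dz_j\wedge dz_k=-dz_k\wedge dz_j$; beyond that the argument is purely linear-algebraic and no further analytic or geometric input is needed.
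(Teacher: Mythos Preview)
Your proposal is correct and is exactly what the paper intends: the corollary is stated without proof, being the direct specialization of \eqref{del-eta-3} to $\varphi^j=dz_j$ via expanding $\del$ on the scalar coefficients and reading off the three independent $(2,3)$-components. Carrying out your sign bookkeeping for the $dz_1\wedge dz_3$ component in fact yields $\partial_{z_1}(\dot\sigma^1_2-\dot\sigma^2_1)-\partial_{z_3}(\dot\sigma^2_3-\dot\sigma^3_2)$, so the $\partial/\partial z_2$ in the second line of \eqref{necessary-condition-3} appears to be a misprint for $\partial/\partial z_3$.
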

\begin{rem}
The previous constructions can be easily adapted replacing the parameter space $I=(-\varepsilon,\varepsilon)$, with $$\mathbb{B}(0,\varepsilon)=\{t=(t_1,\ldots,t_k\}\in\R^k\,\,\,:\,\,\, \vert t\vert <\varepsilon.$$ 
In particular, Theorem \ref{m-theorem-prime} can be generalized by considering
$$
\eta_j=\left(\frac{\partial}{\partial t_j}\Omega\vert_{t=0}\right)^{n-2,n},\quad
\lambda_j=\left(\frac{\partial}{\partial t_j}\Omega\vert_{t=0}\right)^{n-1,n-1}.
$$
\end{rem}

\section{Applications and examples}\label{main-examples}
First, we construct a family of semi-K\"ahler structures on the $6$-dimensional torus $\mathbb{T}^6$, obtained as a deformation of the standard K\"ahler structure on $\mathbb{T}^6$, which cannot be locally compatible with any symplectic form. More precisely, we start by showing the following 
\begin{theorem}\label{main-theorem-1}
Let $(J,\omega)$ be the standard K\"ahler structure on the standard torus 
$\T^{6}=\R^{6}\slash \Z^{6}$, with coordinates $(z_1,z_2,z_3)$, $z_j=x_j+iy_j$, $j=1,2,3$. Let $f=f(z_2,\overline{z}_2)$ be a $\Z^{6}$-periodic smooth complex valued function on $\R^6$ and set $f=u+iv$, where $u=u(x_2,y_2)$, $v=v(x_2,y_2)$. \newline
There is an almost complex structure $J = J(f)$ on $\mathbb{T}^6$ such that
\begin{enumerate}
 \item[I)] If $f=0$ then $J=J(0)$ is the standard complex structure;
 \item[II)] $J$ admits a semi-K\"ahler metric provided $f$ is sufficiently small (in the uniform norm);
 \item[III)] Suppose that $u$, $v$ satisfy the following condition
\begin{equation}\label{condition-Derivative}
 \left(\frac{\partial u}{\partial x_2}
 +
 \frac{\partial v}{\partial y_2}\right)\Big\vert_{(x,y)=0}\neq 0.
\end{equation}
Then $J$ is not locally compatible with respect to any symplectic form on $\mathbb{T}^6$. 
\end{enumerate}
\end{theorem}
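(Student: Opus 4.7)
The plan is to construct $J=J(f)$ by explicitly prescribing its $(1,0)$-coframe on $\T^{6}$ and then verify (I)--(III) by direct computation in that coframe. Concretely, I would set
\[
\varphi^{1}:=dz_{1}-\overline{f}\, d\overline{z}_{3},\qquad \varphi^{2}:=dz_{2},\qquad \varphi^{3}:=dz_{3},
\]
and let $J(f)$ be the almost complex structure on $\T^{6}$ whose $(1,0)$-forms are spanned by $\varphi^{1},\varphi^{2},\varphi^{3}$. Since $f$ is $\Z^{6}$-periodic these forms descend to $\T^{6}$, and a direct determinant computation shows that $\{\varphi^{j},\overline{\varphi^{j}}\}$ is a $\C$-basis of $T^{\ast}_{x}\T^{6}\otimes\C$ at every $x$ for every smooth $f$. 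In particular $J(0)=J$ is the standard complex structure, giving (I).

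For (II), I would use the natural Hermitian form $\omega:=\frac{i}{2}\sum_{j=1}^{3}\varphi^{j}\wedge\overline{\varphi^{j}}$. Because $f$ depends only on $(z_{2},\overline{z}_{2})$ the differentials $d\varphi^{2},d\varphi^{3},d\overline{\varphi^{2}},d\overline{\varphi^{3}}$ all vanish, and
\[
d\varphi^{1}=-\overline{f_{\overline{z}_{2}}}\,\varphi^{2}\wedge\overline{\varphi^{3}}-\overline{f_{z_{2}}}\,\overline{\varphi^{2}}\wedge\overline{\varphi^{3}},\qquad d\overline{\varphi^{1}}=-f_{z_{2}}\,\varphi^{2}\wedge\varphi^{3}-f_{\overline{z}_{2}}\,\overline{\varphi^{2}}\wedge\varphi^{3}.
\]
Thus $d\omega=\frac{i}{2}(d\varphi^{1}\wedge\overline{\varphi^{1}}-\varphi^{1}\wedge d\overline{\varphi^{1}})$ is a sum of four $3$-form summands, and by inspection each summand contains exactly one factor from each of the three pairs $\{\varphi^{k},\overline{\varphi^{k}}\}$, $k=1,2,3$. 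Wedging any such summand with $\varphi^{j}\wedge\overline{\varphi^{j}}$ must then duplicate either $\varphi^{j}$ or $\overline{\varphi^{j}}$ and the product vanishes; hence $\omega\wedge d\omega=0$, so $d\omega^{2}=0$ and $\omega$ is a semi-K\"ahler metric, proving (II).

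For (III), I would argue by contradiction. If $J(f)$ were locally compatible with a symplectic form near the origin, then on some neighbourhood $U\ni 0$ there would exist a closed positive $J(f)$-$(1,1)$ form $\eta=\frac{i}{2}\sum_{j,k}\eta_{jk}\,\varphi^{j}\wedge\overline{\varphi^{k}}$ with $(\eta_{jk})$ Hermitian positive-definite pointwise on $U$. I would decompose $d\eta$ into $J(f)$-pure types and focus on the $(3,0)$-component. Since $d\varphi^{j}=0$ for $j=2,3$ and $d\overline{\varphi^{k}}=0$ for $k=2,3$, the only contribution is $-\frac{i}{2}\sum_{j}\eta_{j1}\,\varphi^{j}\wedge(d\overline{\varphi^{1}})^{(2,0)}$, and the formula above gives $(d\overline{\varphi^{1}})^{(2,0)}=-f_{z_{2}}\,\varphi^{2}\wedge\varphi^{3}$. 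Only the $j=1$ term survives, producing
\[
(d\eta)^{(3,0)}=\frac{i}{2}\,f_{z_{2}}\,\eta_{11}\,\varphi^{1}\wedge\varphi^{2}\wedge\varphi^{3}.
\]
Closedness of $\eta$ forces $f_{z_{2}}(z)\,\eta_{11}(z)\equiv 0$ on $U$, and positivity gives $\eta_{11}(z)>0$, hence $f_{z_{2}}\equiv 0$ on $U$ and in particular $f_{z_{2}}(0)=0$. But the hypothesis yields $2\,\Re f_{z_{2}}(0)=(u_{x_{2}}+v_{y_{2}})(0)\ne 0$, so $f_{z_{2}}(0)\ne 0$, a contradiction.

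The main obstacle in this plan is the placement of $f$ in the coframe: the use of $\overline{f}\,d\overline{z}_{3}$ rather than the naive $f\,d\overline{z}_{3}$ aligns the non-integrability tensor of $J(f)$ against the diagonal coefficient $\eta_{11}$, so that the resulting $(3,0)$-obstruction is driven by $f_{z_{2}}$ (whose real part $(u_{x_{2}}+v_{y_{2}})/2$ is exactly the quantity appearing in the hypothesis) instead of by $f_{\overline{z}_{2}}$, which would yield a genuinely weaker obstruction in the $\Z^{6}$-periodic setting. Once the coframe is chosen correctly, the remainder of the argument is formal and requires only the combinatorial/type-count observations above; I do not foresee any further substantive difficulty.
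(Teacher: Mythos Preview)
Your argument is correct, and in several respects cleaner than the paper's, but it proceeds along a genuinely different route.

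\textbf{Choice of $J(f)$.} The paper perturbs the third coframe element, setting $\varphi^{1}=dz_{1}$, $\varphi^{2}=dz_{2}$, $\varphi^{3}=dz_{3}-f\,d\overline{z}_{1}$, whereas you perturb the first, setting $\varphi^{1}=dz_{1}-\overline{f}\,d\overline{z}_{3}$. These are distinct almost complex structures (related by a coordinate swap and a conjugation of~$f$), but since the theorem only asserts existence of \emph{some} $J(f)$ with the stated properties, either choice is legitimate.

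\textbf{Part (II).} Both you and the paper take $\omega=\tfrac{i}{2}\sum\varphi^{j}\wedge\overline{\varphi^{j}}$ and verify $d\omega^{2}=0$; your ``one factor from each pair'' observation makes the vanishing transparent. Note that your argument (and, in fact, the paper's) actually gives the semi-K\"ahler metric for \emph{every} $f$, not just small ones, since the coframe matrix has determinant~$1$ identically; the smallness hypothesis in the statement is not needed for your construction.

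\textbf{Part (III).} Here the approaches diverge. The paper quotes an external pointwise criterion of Migliorini--Tomassini (their Theorem~2.4), which lists two first-order differential equations in the entries $P_{ij}$ of $J$ that must hold at any point where a compatible symplectic form exists; the paper then checks that one of these equations fails under the hypothesis $(u_{x_{2}}+v_{y_{2}})(0)\neq 0$. Your proof is entirely self-contained: you observe that the $(3,0)$-component of $d\eta$ for any $J(f)$-$(1,1)$-form $\eta$ reduces to a single term $\tfrac{i}{2}f_{z_{2}}\,\eta_{11}\,\varphi^{123}$, so closedness forces $f_{z_{2}}\eta_{11}\equiv 0$, and positivity of $\eta_{11}$ then contradicts $2\,\mathrm{Re}\,f_{z_{2}}(0)=(u_{x_{2}}+v_{y_{2}})(0)\neq 0$. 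Your route avoids the black-box citation and makes manifest that the obstruction is precisely the Nijenhuis-type $(0,2)$-part of $d\varphi^{1}$; the cost is that you must fix the placement of $\overline{f}$ (as you note) so that the surviving coefficient is~$f_{z_{2}}$ rather than~$f_{\overline{z}_{2}}$. The paper's approach, by contrast, applies to any $J$ written as an explicit matrix in real coordinates, without needing to tune a coframe.
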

\begin{proof}
 I) Define
 \begin{equation}\label{almost-complex-structure-torus}
\left\{
\begin{array}{ll}
J\partial_{x_1}=& 2v\partial_{x_3}+\partial_{y_1}-2u\partial_{y_3}\\[5pt]
J\partial_{x_2}=& \partial_{y_2}\\[5pt]
J\partial_{x_3}=& \partial_{y_3}\\[5pt]
J\partial_{y_1}=& -\partial_{x_1}-2u\partial_{x_3}-2v\partial_{y_3}\\[5pt]
J\partial_{y_2}=& -\partial_{x_1}\\[5pt]
J\partial_{y_3}=& -\partial_{x_3}\\[5pt]
\end{array}
\right.
\end{equation}
By definition of $J$, in view of the $\Z^6$-periodicity of the functions $u$, $v$ we immediately get that $J$ gives rise to an almost complex strcuture on $\mathbb{T}^6$, such that $J(0)$  coincides with the standard complex structure on $\mathbb{T}^6$.\vskip.2truecm\noindent
 II) The almost complex structure defined by \eqref{almost-complex-structure-torus} induces an almost complex structure on the cotangent bundle of $\mathbb{T}^6$, still denoted by $J$ and   expressed as
 \begin{equation}\label{almost-complex-structure-dual-torus}
\left\{
\begin{array}{ll}
J dx_1=& -dy_1\\[5pt]
J dx_2=& -dy_2\\[5pt]
J dx_3=&-dy_3+ 2vdx_1+\partial_{y_1}-2udy_1\\[5pt]
J dy_1=& dx_1\\[5pt]
J dy_2=& dx_2\\[5pt]
J dy_3=&dx_3- 2udx_1-2vdy_1
\end{array}
\right.
\end{equation}

Accordingly, a $(1,0)$-coframe on $\mathbb{T}^6$ with respect to $J$ is given by
$$
\varphi^1=dz_1,\quad \varphi^2=dz_2,\quad\varphi^3=dz_3-fd\bar{z}_1.
$$
Thus, 
$$
\sigma^3_1=f(z_2,\overline{z}_2),
$$
with the other $\sigma^j_k$ vanishing. Then, for $f$ small enough, $J$ admits a semi-K\"ahler metric $g$, whose fundamental form is provided by
$$
\omega=\frac{i}{2}\sum_{j=1}^3\varphi^j\wedge\overline{\varphi^j}
$$
\vskip.2truecm\noindent
III) In order to show that the almost complex structure admits no locally compatible symplectic forms when \eqref{condition-Derivative}  holds, we need to recall the following result.\newline 
Let $P$ be an almost complex structure on $\R^6$, with coordinates $(x_1,\ldots,x_6)$. Then, according to \cite[Theorem 2.4]{MT}, if $P$ is locally compatible with respect to a symplectic form, then the following necessary conditions hold

\begin{equation}\label{equations-mt}
\left\{
\begin{array}{l}
-\frac{\partial}{\partial x_1}(P_{26}-P_{62})-\frac{\partial}{\partial x_2}(P_{16}-P_{61})
-\frac{\partial}{\partial x_3}(P_{15}-P_{51})\\[10pt]
-\frac{\partial}{\partial x_4}(P_{23}-P_{32})+\frac{\partial}{\partial x_5}(P_{13}-P_{31})
-\frac{\partial}{\partial x_6}(P_{12}-P_{21})=0\\[15pt]
-\frac{\partial}{\partial x_1}(P_{23}-P_{32})-\frac{\partial}{\partial x_2}(P_{13}-P_{31})
-\frac{\partial}{\partial x_3}(P_{12}-P_{21})\\[10pt]
-\frac{\partial}{\partial x_4}(P_{26}-P_{62})+\frac{\partial}{\partial x_5}(P_{16}-P_{61})
-\frac{\partial}{\partial x_6}(P_{15}-P_{51})=0,
\end{array}
\right.
\end{equation}
where all the derivatives are computed at $x=0$. In our notation, $x_4=y_1$, $x_5=y_2$, $x_6=y_3$.
\vskip.2truecm\noindent
Now, by the assumption \eqref{condition-Derivative} 
$$
\left(\frac{\partial u}{\partial x_2}
 +
 \frac{\partial v}{\partial y_2}\right)\Big\vert_{(x,y)=0}\neq 0
$$ 
on the partial derivatives at $(x,y)=0$ of $f=u+iv=u(x_2,y_2)+iv(x_2,y_2)$, so  we immediately see that the first equation of \eqref{equations-mt} is not satisfied. Therefore, $J(f)$ cannot be locally compatible with any symplectic form.
\end{proof}
\begin{rem} We can generate $1$-parameter families of almost complex structures by setting $J_t = J(tf)$. These show that the necessary condition \eqref{necessary-condition-3} of Corollary \ref{m-cor-3-first} is also sufficient in this case. Indeed, in such a case the natural $
\omega_t=\frac{i}{2}\sum_{j=1}^3\varphi^j_t\wedge\overline{\varphi^j_t}
$ satisfies $d\omega^2_t=0$.\newline
As a consequence of III), for any given $t\neq 0$, $J_t$ is not integrable. 
\end{rem}
\begin{ex}{\em (Iwasawa manifold)\,}{\em Let $\C^3$ be endowed with the product $*$ defined by
$$
(w_1,w_2,w_3)*(w_1,w_2,w_3)=(w_1+z_1,w_2+z_2,w_3+w_1z_2+z_3).
$$
Then $(\C^3,*)$ is a complex nilpotent Lie group which admits a lattice $\Gamma =\Z[i]^3$ and accordingly it turns out that
$\mathbb{I}_3=\Gamma\backslash\C^3$ is a compact complex $3$-dimensional manifold, the {\em Iwasawa manifold}. It is immediate to check that
$$
\{\varphi^1=dz_1,\quad \varphi^2=dz_2,\quad \varphi^3=dz_3-z_1dz_2\}
$$
is a complex $(1,0)$-coframe for the standard complex structure naturally induced by $\C^3$, whose dual frame is
$$
\{\zeta_1=\frac{\partial}{\partial z_1},\quad 
\zeta_2=\frac{\partial}{\partial z_2}+z_1\frac{\partial}{\partial z_3},\quad 
\zeta_3=\frac{\partial}{\partial z_3}\}.
$$
The following
$$
g=\sum_{j=1}^3\varphi^j\odot\overline{\varphi^j}
$$
is a balanced metric on $\mathbb{I}_3$. Indeed,
the fundamental form of $g$ is
$$
\omega=\frac{i}{2}(dz_1\wedge d\overline{z_1}+dz_2\wedge d\overline{z_2}-z_1dz_2\wedge d\overline{z_3}-\overline{z_1}dz_3\wedge d\overline{z_2}+dz_3\wedge d\overline{z_3},
$$
which satisfies $d\omega^2=0$. \newline
We will provide a smooth curve of almost complex structures $\{J_t\}_{t\in I}$ such that $J_0$ coincides with the complex structure on $\mathbb{I}_3$ and such that, for any $t\neq 0$, $J_t$ admits no  semi-K\"ahler metric. To this purpose let us define $\{J_t\}_{t\in I}$ on $\mathbb{I}_3$ by assigning 
$$
\Phi(t)=t\overline{\varphi^1}\otimes\zeta_2-
t\overline{\varphi^2}\otimes\zeta_1.
$$
Then, accordingly,
$$
\{\varphi^1_t=dz_1+td\overline{z_2},\quad \varphi^2_t=dz_2-td\overline{z_1},\quad \varphi^3_t=dz_3-z_1dz_2\}
$$
is a complex $(1,0)$-coframe on $(\mathbb{I}_3,J_t)$. A simple calculation yields to the following structure equations
\begin{equation}\label{structure-equations-iwasawa-deformed}
d\varphi^1_t=0,\quad
d\varphi^2_t=0,\quad
d\varphi^3_t=-\frac{1}{1+t^2}\left[
\varphi^{12}_t+t(\varphi_t^{1\bar{1}}+\varphi_t^{2\bar{2}}) +t^2\varphi_t^{\bar{1}\bar{2}}
\right]
\end{equation}
The last equation implies that $J_t$ is not integrable for $t\neq 0$.
First we observe that, by the definition of $\Phi(t)$,  
$$\sigma^2_1=t,\quad\sigma^1_2=-t,\qquad \sigma^j_k=0,\,\hbox{\rm otherwise}.
$$
Hence
\begin{eqnarray*}
 \del \left(\left[(\dot{\sigma}^2_3-\dot{\sigma}^3_2)\varphi^1 +(\dot{\sigma}^3_1-\dot{\sigma}^1_3)\varphi^2+(\dot{\sigma}^1_2-\dot{\sigma}^2_1)\varphi^3 \right]\wedge\varphi^{\bar{1}\bar{2}\bar{3}}\right)&=&2\del\varphi^{3\bar{1}\bar{2}\bar{3}}\\
 &=&-2\varphi^{12\bar{1}\bar{2}\bar{3}}
\end{eqnarray*}
A simple calculation shows that 
$$
0\neq [-2\varphi^{12\bar{1}\bar{2}\bar{3}}]_{\delbar}\in H_{\delbar}^{2,3}(\mathbb{I}_3),
$$
that is the necessary condition 
$$
0\neq [\del\eta]_{\delbar}\in H_{\delbar}^{2,3}(\mathbb{I}_3)
$$
of Corollary \ref{solvable-necessary.condition} is not satisfied.
Indeed, the form 
$$
-2\varphi^{12\bar{1}\bar{2}\bar{3}}
$$
is $\delbar$-harmonic with respect to the Hermitian 
metric 
$$
g=\sum_{j=1}^3\varphi^j\odot\overline{\varphi^j}
$$
on $\mathbb{I}_3$. In fact a stronger statement is true. We show that $J_t$ does not admit any semi-K\"ahler metric for $t\neq 0$. By the third equation of \eqref{structure-equations-iwasawa-deformed}, we immediately get that 
$$
\left(d\varphi^3_t\right)^{1,1}=-\frac{t}{1+t^2}\left(
\varphi_t^{1\bar{1}}+\varphi_t^{2\bar{2}}\right)
$$
Then, Proposition \ref{nop} applies, proving the following
\begin{prop}\label{prop-Iwasawa}
The curve $\{J_t\}_{t\in I}$ of almost complex structures on $\mathbb{I}_3$ satisfies the following:
\begin{enumerate}
 \item $J_0$ coincides with the natural complex structure on $\mathbb{I}_3$ and admits a balanced metric.
 \item For any given $t\neq 0$, $J_t$ is not integrable and it has no semi-K\"ahler metrics.
\end{enumerate}

\end{prop}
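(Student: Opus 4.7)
The plan is to handle the two assertions separately, using the structure equations \eqref{structure-equations-iwasawa-deformed} and Proposition \ref{nop}, both already at our disposal.

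For part (1), the identification $J_0 = $ standard complex structure on $\mathbb{I}_3$ is immediate from $\Phi(0)=0$, and the balanced metric $g = \sum_{j=1}^3 \varphi^j \odot \overline{\varphi^j}$ was exhibited explicitly above, with the fundamental form $\omega$ satisfying $d\omega^2 = 0$ by direct computation using $d\varphi^3 = -\varphi^1 \wedge \varphi^2$.

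For part (2), the non-integrability of $J_t$ for $t \neq 0$ can be read off directly from the third equation in \eqref{structure-equations-iwasawa-deformed}: the expansion of $d\varphi^3_t$ contains the $(0,2)$-component $-\frac{t^2}{1+t^2}\varphi_t^{\bar{1}\bar{2}}$, which is nonzero for $t \neq 0$; Newlander--Nirenberg then forbids integrability. For the non-existence of a semi-Kähler metric, I would note that in dimension $n=3$ the semi-Kähler condition $d\omega^2 = 0$ is exactly the $2$-Kähler condition ($p = n-1 = 2$), and invoke Proposition \ref{nop} with that value of $p$. The required test form is a non-closed $(2n-2p-1)=1$-form $\beta$ whose $(1,1)$-part of $d\beta$ has the shape $\sum_k c_k \psi_k \wedge \overline{\psi_k}$ with $\psi_k$ simple $(1,0)$-covectors and all $c_k$ of a common sign. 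The natural candidate is $\beta = \varphi^3_t$: the structure equation gives $(d\varphi^3_t)^{1,1} = -\frac{t}{1+t^2}\bigl(\varphi_t^{1\bar{1}} + \varphi_t^{2\bar{2}}\bigr)$, so $\beta$ is not closed for $t \neq 0$, and the two coefficients coincide and thus trivially share the same sign. Proposition \ref{nop} then rules out any $2$-Kähler form on $(\mathbb{I}_3, J_t)$.

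The argument is essentially mechanical once the structure equations are in hand, and the only point to verify is the sign condition on the $(1,1)$-part of $d\varphi^3_t$, which is automatic here. It is worth emphasizing that this obstruction is strictly stronger than the cohomological one provided by Corollary \ref{solvable-necessary.condition}: the latter only obstructs deformations of the specific balanced metric $g$, whereas Proposition \ref{nop} forbids \emph{every} semi-Kähler metric on $(\mathbb{I}_3, J_t)$ for $t \neq 0$, independently of any reference metric on $\mathbb{I}_3$.
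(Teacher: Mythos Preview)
Your argument is correct and matches the paper's approach essentially line for line: part (1) is the explicit balanced metric already exhibited, non-integrability for $t\neq 0$ is read off from the $(0,2)$-component of $d\varphi^3_t$ in \eqref{structure-equations-iwasawa-deformed}, and the non-existence of semi-K\"ahler metrics follows by applying Proposition \ref{nop} to $\beta=\varphi^3_t$ via $(d\varphi^3_t)^{1,1}=-\frac{t}{1+t^2}(\varphi_t^{1\bar 1}+\varphi_t^{2\bar 2})$. The only cosmetic difference is that the paper first records the failure of the cohomological condition from Corollary \ref{solvable-necessary.condition} before passing to the stronger obstruction, whereas you go directly to Proposition \ref{nop}.
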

}
\end{ex}
\begin{ex}
{\em  (A family of almost $2$-K\"ahler structures on $\C^4$) Let $M=\C^4$ with real coordinates $(x_1,\ldots,x_4,y_1,\ldots,y_4)$ and $g$ be a smooth real valued function on $\C^4$. Define an almost complex structure $\mathcal{J}=\mathcal{J}_g$ on $\C^4$  by setting
\begin{equation}\label{almost-complex-structure-C4}
\left\{
\begin{array}{ll}
\mathcal{J}\partial_{x_1}=& g\partial_{x_3}+\partial_{y_1}\\[5pt]
\mathcal{J}\partial_{x_2}=& \partial_{y_2}\\[5pt]
\mathcal{J}\partial_{x_3}=& \partial_{y_3}\\[5pt]
\mathcal{J}\partial_{x_4}=& \partial_{y_4}\\[5pt]
\mathcal{J}\partial_{y_1}=& -\partial_{x_1}-g\partial_{y_3}\\[5pt]
\mathcal{J}\partial_{y_2}=& -\partial_{x_1}\\[5pt]
\mathcal{J}\partial_{y_3}=& -\partial_{x_3}\\[5pt]
\mathcal{J}\partial_{y_4}=& -\partial_{x_4}
\end{array}
\right.
\end{equation}
Then, a straightforward calculation shows that 
\begin{equation}\label{PHI-4}
\left\{
\begin{array}{lll}
\Phi^1&=&dx_1+idy_1,\\[5pt]
\Phi^2&=&dx_2+idy_2,\\[5pt]
\Phi^3&=&dx_3+i(-gdx_1+dy_3),\\[5pt]
\Phi^4&=&dx_4+idy_4
\end{array}
\right.
\end{equation}
is a complex $(1,0)$-coframe on $(\C^4,\mathcal{J})$. Define 
$$
\Omega=\frac{1}{4}\sum_{j<k}\Phi^{j\bar{j}k\bar{k}}
$$
and set
$$
\Omega_\tau=\Omega+\tau\hbox{\rm Re}\,\Phi^{2\bar{3}4\bar{4}}
$$
We have the following 
\begin{lemma}\label{lemma-C-4}
The form $\Omega_\tau$ is $(2,2)$ with respect to $\mathcal{J}$ and positive for every $\tau\in (-\varepsilon,\varepsilon)$, for $\varepsilon$ small enough. Futhermore, assume that $g=g(x_2,x_3)$. Then, for any given $\tau$, we have that 
$$
d\Omega_\tau=0 \qquad \iff \qquad
\frac{\partial g}{\partial x_2}-2\tau
\frac{\partial g}{\partial x_3}=0
$$
\end{lemma}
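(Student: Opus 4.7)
The plan is to verify in turn the type, positivity, and closure claims. The first two are essentially formal, and the heart of the argument is a direct computation that reduces $d\Omega_\tau$ to a single $5$-form.

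For the type claim, each $\Phi^{j\bar j k\bar k}$ is a wedge of two $(1,1)$-forms $\Phi^{j\bar j}$ and $\Phi^{k\bar k}$, and $\Phi^{2\bar 3 4\bar 4}$ has exactly two $(1,0)$ and two $(0,1)$ factors, so each summand of $\Omega_\tau$ has type $(2,2)$; the real part of a complex $(2,2)$-form is a real $(2,2)$-form. For transversality, observe that $\Omega$ is (up to a sign coming from reordering wedge factors) a constant multiple of $\omega^2$ for the canonical positive $(1,1)$-form $\omega=\tfrac{i}{2}\sum_{j=1}^{4}\Phi^{j\bar j}$, and hence transverse. Since transversality is an open condition on $A^{2,2}_{\mathbb{R}}$, $\Omega_\tau$ remains transverse for all $\tau$ in a sufficiently small interval about $0$.

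For the closure equation, \eqref{PHI-4} gives $d\Phi^{1}=d\Phi^{2}=d\Phi^{4}=0$ and $d\Phi^{3}=-i\,dg\wedge dx_1$, where $dg=g_{x_2}\,dx_2+g_{x_3}\,dx_3$ by the hypothesis $g=g(x_2,x_3)$. Since $d\Phi^{3}$ is purely imaginary we have $d\bar\Phi^{3}=-d\Phi^{3}$, and the Leibniz rule simplifies $d\Phi^{3\bar 3}$ to $2\,d\Phi^{3}\wedge dx_3$. The $g_{x_3}\,dx_3\wedge dx_1\wedge dx_3$ contribution vanishes by antisymmetry, leaving the single monomial $d\Phi^{3\bar 3}=2ig_{x_2}\,dx_1\wedge dx_2\wedge dx_3$. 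Substituting into $d\Omega=\tfrac{1}{4}\sum_{j<k}d\Phi^{j\bar j k\bar k}$, the contributions of $\Phi^{1\bar 1}\wedge d\Phi^{3\bar 3}$ and $\Phi^{2\bar 2}\wedge d\Phi^{3\bar 3}$ vanish because of repeated $dx_1$ or $dx_2$ factors, so only $d\Phi^{3\bar 3 4\bar 4}$ survives and one obtains
$$d\Omega=g_{x_2}\,dx_1\wedge dx_2\wedge dx_3\wedge dx_4\wedge dy_4.$$

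A parallel Leibniz expansion for $\Phi^{2\bar 3 4\bar 4}$ shows that only the $d\bar\Phi^{3}$ factor contributes. The resulting $5$-form splits into one real monomial proportional to $dx_1\wedge dx_2\wedge dx_3\wedge dx_4\wedge dy_4$ and two purely imaginary monomials containing $dy_2$; since $d$ commutes with the real-part operator, passing to $\mathrm{Re}$ annihilates the latter and gives $d(\mathrm{Re}\,\Phi^{2\bar 3 4\bar 4})=-2g_{x_3}\,dx_1\wedge dx_2\wedge dx_3\wedge dx_4\wedge dy_4$. Combining,
$$d\Omega_\tau=(g_{x_2}-2\tau g_{x_3})\,dx_1\wedge dx_2\wedge dx_3\wedge dx_4\wedge dy_4,$$
which vanishes precisely when the stated PDE holds. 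The principal obstacle is sign and ordering bookkeeping inside the Leibniz expansions and the reduction of each wedge monomial to the canonical basis $5$-form; no deeper ingredient is required.
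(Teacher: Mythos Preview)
Your proof is correct and follows essentially the same direct-computation approach as the paper; the final formulas for $d\Omega$ and $d(\mathrm{Re}\,\Phi^{2\bar 3 4\bar 4})$ agree exactly with the paper's equations \eqref{d-OMEGA} and \eqref{d-PHI-2344}. The only difference is bookkeeping: the paper first expands each $\Phi^{j\bar j k\bar k}$ fully in the real basis $dx_i,dy_i$ and then differentiates, whereas you compute $d\Phi^3=-i\,dg\wedge dx_1$ once and apply the Leibniz rule on the complex coframe, which is slightly more efficient but not a genuinely different argument.
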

\begin{proof}
By \eqref{PHI-4} we obtain
$$
\begin{array}{ll}
\Phi^{1\bar{1}2\bar{2}}&=-4dx_1\wedge dy_1\wedge dx_2\wedge dy_2,\quad
\Phi^{1\bar{1}3\bar{3}}=-4dx_1\wedge dy_1\wedge dx_3\wedge dy_3\\[7pt]
\Phi^{1\bar{1}4\bar{4}}&=-4dx_1\wedge dy_1\wedge dx_4\wedge dy_4,\quad
\Phi^{2\bar{2}4\bar{4}}=-4dx_2\wedge dy_2\wedge dx_4\wedge dy_4\\[7pt]
\Phi^{2\bar{2}3\bar{3}}&=4\left(gdx_1\wedge dx_2\wedge dx_3\wedge dy_2-dx_2\wedge dy_2\wedge dx_3\wedge dy_3\right)\\[7pt]
\Phi^{3\bar{3}4\bar{4}}&=-4\left(gdx_1\wedge dx_3\wedge dx_4\wedge dy_4+dx_3\wedge dy_3\wedge dx_4\wedge dy_4\right)
\end{array}
$$
In view of the above formulae, since $g=g(x_2,x_3)$, we get
\begin{equation}\label{d-OMEGA}
d\Omega=\frac14 d\sum_{j<k}\Phi^{j\bar{j}k\bar{k}}=
\frac{\partial g}{\partial x_2}
dx_1\wedge dx_2\wedge dx_3\wedge dx_4\wedge dy_4
\end{equation}
Expanding $\Phi^{2\bar{3}4\bar{4}}$, we obtain
\begin{eqnarray*}
\hbox{\rm Re}\,\Phi^{2\bar{3}4\bar{4}}&=&
2\left(-gdx_1\wedge dx_2\wedge dx_4\wedge dy_4 
-dx_2\wedge dy_3\wedge dx_4\wedge dy_4
+\right.\\
&{}& \left.+dy_2\wedge dx_3\wedge dx_4\wedge dy_4
\right) 
\end{eqnarray*}
Consequently,
\begin{equation}\label{d-PHI-2344}
 d\hbox{\rm Re}\,\Phi^{2\bar{3}4\bar{4}}=
 -2\frac{\partial g}{\partial x_3}
dx_1\wedge dx_2\wedge dx_3\wedge dx_4\wedge dy_4
\end{equation}
Taking into account the defnition of 
$$
\Omega_\tau=\Omega +\tau\hbox{\rm Re}\,\Phi^{2\bar{3}4\bar{4}},
$$
the Lemma follows immediately from \eqref{d-OMEGA} and \eqref{d-PHI-2344}.
\end{proof}

We are ready to state and prove the following
\begin{theorem}\label{C-4-main}
Let $g$ be a smooth real valued function on $\C^4$ such that $g=g(x_2,x_3)$. Assume that, for any given $\tau\in (-\varepsilon,\varepsilon)$ the function $g$ satisfies the partial differential equation
$$
\frac{\partial g}{\partial x_2}-2\tau
\frac{\partial g}{\partial x_3}=0.
$$
Let $\mathcal{J}$ be the almost complex structure on $\C^4$ associated with $g$. Then,
\begin{enumerate}
 \item[i)] For any given $\tau\in (-\varepsilon,\varepsilon)$, $(\mathcal{J},\Omega_\tau)$ is an almost $2$-K\"ahler structure on $\C^4$.
 \item[ii)] If $\frac{\partial g}{\partial x_2}\vert_{(x,y)=0}\neq 0$, then $\mathcal{J}$ cannot be locally compatible with respect to any symplectic form on $\C^4$.
\end{enumerate}
\end{theorem}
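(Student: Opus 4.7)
I would prove the two assertions separately. For (i), the heavy lifting has already been performed in Lemma \ref{lemma-C-4}, which furnishes a closed real $(2,2)$-form $\Omega_\tau$ with respect to $\mathcal{J}$ precisely when $g$ satisfies the stated PDE, and asserts its positivity for $\tau$ in a small interval. The only remaining point is to upgrade positivity to the transversality required in the definition of an almost $p$-K\"ahler structure. I would observe that when $g\equiv 0$ and $\tau=0$ the form $\Omega_0=\frac{1}{4}\sum_{j<k}\Phi^{j\bar{j}k\bar{k}}$ reduces, up to a positive constant, to $\frac{1}{2}\omega_{\mathrm{std}}^{2}$, where $\omega_{\mathrm{std}}$ is the standard K\"ahler form on $\C^4$; this is strictly transverse. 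Since transversality is an open condition on real $(2,2)$-forms, $\Omega_\tau$ remains transverse for $\tau$ and $g$ sufficiently small in the uniform norm, which combined with Lemma \ref{lemma-C-4} yields the almost $2$-K\"ahler property.

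For (ii), the plan is to mimic part III of the proof of Theorem \ref{main-theorem-1}: invoke the necessary first-order conditions at a point for an almost complex structure on $\R^{2n}$ to be locally compatible with a symplectic form, in the spirit of \cite[Theorem 2.4]{MT}. Reading off the matrix entries of $\mathcal{J}$ from \eqref{almost-complex-structure-C4}, one sees that the only entries depending on the coordinates appear in $\mathcal{J}\partial_{x_1}$ and $\mathcal{J}\partial_{y_1}$, and they depend only on $g=g(x_2,x_3)$. I expect that among the necessary conditions evaluated at the origin exactly one picks up the quantity $\partial g/\partial x_2|_{0}$, by an elementary bookkeeping of indices analogous to that carried out in Theorem \ref{main-theorem-1}; the hypothesis that this derivative is non-zero will then yield the required obstruction.

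The main obstacle will be the $\R^8$ version of the MT necessary conditions, since the cited statement \cite[Theorem 2.4]{MT} is phrased for $\R^6$. One either quotes the natural generalization or re-derives it directly: a hypothetical compatible symplectic $\omega$ must be a $(1,1)$-form for $\mathcal{J}$, so requiring certain components of $d\omega$ to vanish at $0$ produces relations among the first-order derivatives $\partial_i \mathcal{J}_{jk}(0)$ that hold independently of the value of $\omega(0)$. Because the coordinate dependence of $\mathcal{J}$ reduces entirely to the two numbers $\partial g/\partial x_2|_0$ and $\partial g/\partial x_3|_0$, the entire obstruction collapses to a single linear combination of these two quantities, which one verifies by inspection to be a non-zero multiple of $\partial g/\partial x_2|_0$, contradicting the hypothesis.
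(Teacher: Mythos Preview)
Your treatment of (i) matches the paper's: both simply invoke Lemma~\ref{lemma-C-4}. Your extra remark about transversality being open is a reasonable gloss on the word ``positive'' in that lemma.

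For (ii), your plan is workable in principle but takes the hard road. You correctly flag the $\R^8$ version of the MT necessary conditions as ``the main obstacle,'' and propose to either quote a generalization or re-derive it. The paper sidesteps this obstacle entirely by a restriction trick: from \eqref{almost-complex-structure-C4} one sees that the span of $\partial_{x_1},\partial_{x_2},\partial_{x_3},\partial_{y_1},\partial_{y_2},\partial_{y_3}$ is $\mathcal{J}$-invariant, so $\mathcal{J}$ restricts to an almost complex structure on $\R^6\cong\C^3_{z_1,z_2,z_3}\subset\C^4$. On this $\R^6$ one applies \cite[Theorem~2.4]{MT} as stated, and the second equation of \eqref{equations-mt} fails because $\partial g/\partial x_2|_{0}\neq 0$. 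Then one argues by contradiction: a symplectic form on $\C^4$ compatible with $\mathcal{J}$ would restrict to give an almost K\"ahler structure on the almost complex submanifold $\R^6$, contradicting what was just shown. Your direct $\R^8$ computation would arrive at the same conclusion, but the paper's approach buys you the result with no new derivation---only the observation that $\R^6$ sits inside $\C^4$ as a $\mathcal{J}$-holomorphic submanifold.
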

\begin{proof}
i) By Lemma \ref{lemma-C-4} and by assumption on $g$, we immediately obtain that $(\mathcal{J},\Omega_\tau)$ is an almost $2$-K\"ahler structure on $\C^4$. \vskip.2truecm\noindent
ii) We will apply again \cite[Theorem 2.4]{MT}.\newline 
Let $\R^6\cong\C^3_{z_1,z_2,z_3}$, where $z_j=x_j+iy_j,\,j=1,2,3$. Then the restriction $\mathcal{J}\vert_{\R^6}$ of $\mathcal{J}$ to $\R^6$ gives rise to an almost complex structure on $\R^6$. We have that 
$$
\left\{
\begin{array}{ll}
\mathcal{J}\partial_{x_1}=& g\partial_{x_3}+\partial_{y_1}\\[5pt]
\mathcal{J}\partial_{x_2}=& \partial_{y_2}\\[5pt]
\mathcal{J}\partial_{x_3}=& \partial_{y_3}\\[5pt]
\mathcal{J}\partial_{y_1}=& -\partial_{x_1}-g\partial_{y_3}\\[5pt]
\mathcal{J}\partial_{y_2}=& -\partial_{x_1}\\[5pt]
\mathcal{J}\partial_{y_3}=& -\partial_{x_3}\\
\end{array}
\right.,
$$
where $g=g(x_2,x_3)$ satisfies 
$$
\frac{\partial g}{\partial x_2}-2\tau
\frac{\partial g}{\partial x_3}=0.
$$
By assumption, $
\frac{\partial g}{\partial x_2}\vert_{(x,y)=0}\neq 0$. Therefore, in view of \cite[Theorem 2.4]{MT}, the second equation of \eqref{equations-mt} is not satisfied. Consequently, $\mathcal{J}\vert_{\R^6}$ cannot be locally compatible with any symplectic form on $\R^6\cong\C^3_{z_1,z_2,z_3}$. \newline 
By contradiction: assume that there exists a symplectic form on $\C^4$ locally compatible with $\mathcal{J}$. Thus $(\mathcal{J},\omega)$ gives rise to an almost K\"ahler metric $\mathcal{G}$ on $\C^4$. Hence, $(\mathcal{J}\vert_{\R^6},\mathcal{G}\vert_{\R^6})$ would be an almost K\"ahler structure on the submanifold $\R^6\subset\C^4$. This is absurd.
\end{proof}
As an explicit example, we may take $g$ defined as
$$
g(x_2,x_3)=2\tau x_2+x_3.
$$
Then, 
\begin{itemize}
 \item[a)] $g$ verifies the partial differential equation
 $$
\frac{\partial g}{\partial x_2}-2\tau
\frac{\partial g}{\partial x_3}=0.
$$
\item[b)] For every $\tau\in (-\varepsilon,\varepsilon),$ $\tau\neq 0$, it is $\frac{\partial g}{\partial x_2}\vert_{(x,y)=0}\neq 0$.
\item[c)] For $\tau=0$ the almost complex structure is not integrable and it is almost K\"ahler.
\end{itemize}
Therefore, for any $\tau\in (-\varepsilon,\varepsilon),$ $\tau\neq 0$, $(\mathcal{J},\Omega_\tau)$ is a almost $2$ K\"ahler structure on $\C^4$, such that $\mathcal{J}$ admits no compatible symplectic structures.

}
\end{ex}

\end{document}